
\documentclass[11pt,reqno]{amsart}

\usepackage[utf8]{inputenc}
\usepackage{amsfonts,latexsym,amssymb,amsmath,amsthm,amsrefs,etex,slashed,cancel}
\usepackage{todonotes}
\usepackage[hidelinks]{hyperref}
\usepackage{a4wide}
\usepackage{enumerate}
\usepackage{MnSymbol}
\usepackage{nicefrac}

\newtheorem{thm}{Theorem}
\newtheorem*{theorem*}{Theorem}
\newtheorem{prop}{Proposition}
\newtheorem{lem}{Lemma}
\newtheorem{cor}{Corollary}
\newtheorem{remark}{Remark}

\newtheorem{defn}{Definition}

\newtheorem{example}[thm]{Example}

\newcommand{\Courant}[0]{\operatorname{Co}}

\numberwithin{equation}{section}

\begin{document}
\title[Approximation of water hammer by a Lax-Wedroff finite difference scheme]{
Approximation of the non-linear water hammer problem by a Lax-Wendroff finite difference scheme 
}
\author[Hugo Carrillo-Lincopi]{Hugo Carrillo-Lincopi}
\address{Inria Chile Research Center, Av. Apoquindo 2827, Las Condes, Santiago, Chile} \email{hugo.carrillo@inria.cl}

\author[A. Waters]{Alden Waters}
\address{Leibniz Universit\"{a}t Hannover, Institut f\"{u}r Analysis,
Welfengarten 1,
30167 Hannover,
Germany
} \email{alden.waters@math.uni-hannover.de}

\author[T. Xu ]{Teke Xu}
\address{ University of Groningen, Bernoulli Institute,
Nijenborgh 9,
9747 AG Groningen,
The Netherlands
} \email{t.xu@rug.nl}

\maketitle

\begin{abstract}
We study the water hammer problem in the case of a sudden closing of a valve upstream, and we consider a Lax-Wendroff finite difference scheme in order to obtain a numerical solution of this problem. In order to establish the approximation of this scheme to the original case, we rigorously show some properties such as consistency, stability and weak convergence of the scheme under reasonable conditions. In addition, we present some numerical simulations in order to show some features of the numerical method.
\end{abstract}

\section{Introduction}
Water flow can be described by various models such as the Saint-Venant model \cite{BastinCoron2009}, a variant of the linearized Euler equations \cite{ChrisShuang2014,Gugat2018}, and a switched differential algebraic equation model \cite{KausTren18}. Although the former two models follow the same general equation set, the Saint-Venant model usually refers to shallow water systems with free surfaces, while a variant of the linearized Euler equations sometimes called the isothermal Euler equations \cite{Gugat2018} typically refer to fluid flow in a (closed) pipeline system. In \cite{BastinCoron2016book} the nomenclature `Saint-Venant equations' is used interchangeably for the fluid flow in a lake and a pipe. They refer to the difference as ``open channel" and ``pipeline model". Here the main motivation for our study of the Saint-Venant equations is the so-called water hammer problem which occurs when a valve is closed very quickly in a pipeline carrying water. Let $c_f$ and $D$ be constants which reflect the friction and diameter of the pipeline respectively. We refer to our model as the Saint-Venant equations (pipeline case) in the spirit of \cite{BastinCoron2016book}, Example 1.6.1, and these are given by: 
\begin{align}\label{pdep1}
\begin{cases}
\partial _t\rho + \partial_x q = 0 \\
\partial_tq + \partial_x\left(\frac{q^2}{\rho} + p\right) = -\frac{c_fa|q|q}{2D}, \quad (t,x)\in (0,T)\times \mathbb{R},\\
(\rho,q)=(\rho_0,q_0)
\end{cases}\end{align}
where $p$ is pressure which is directly proportional to $\rho$ the material density and $q$ is the fluid flow with $(\rho_0,q_0)$ some given functions. The number $a=0$ if the model considers no friction \cite{BastinCoron2016book} and $a=1$ if it considers friction \cite{Herty2009, Gugat2018}. 

Several numerical methods have been used to approximate these equations \eqref{pdep1} with $a=0$, such as the Upwind Scheme cf. \cite{AudussePerthatme2004, WuZhao2015} and Galerkin methods cf. \cite{Williams2019, LiuTadmor2008}. For higher-order accuracy numerical method, one can also refer to essentially non-oscillatory (ENO) schemes \cite{Harten1989, HartenOsher1986, Harten1997} or weighted-essentially non-oscillatory (WENO) schemes \cite{LiuOsher1994, Shu1998}. However it has been proposed in the presence of friction in the pipeline, that there should be an extra term (corresponding to $a=1$) as stated in \cite{Herty2009, zhangboran2018}. To this end, we study the nonlinear hyperbolic balanced laws with changing boundary conditions and semi-linear friction term. The theorems and numerical algorithms here are the first to rigorously analyze pipe valve closure as a dynamical boundary condition. 

For simplicity, if we assume a pipe that is $2L$ long with left and right side connecting to a reservoir (giving constant water density) and at time $t_1$, there is a valve which closes instantly to cut off the water flow at $x = L$ in the pipe, and this leads to changing boundary conditions. The water hammer problem occurs when the valve is closed very quickly. We assume in our model that the valve is closed instantaneously leading to a hard boundary condition. This scenario is different to the slow closing valves c.f. \cite{BastinCoron2016book} for an overview or the engineering model \cite{KausTren18} which gets rid of the spatial variable. Therefore, we introduce an indicator function (similar to a Heaviside function) $\mathbb{H}{(\Omega)}$ which maps from some subset $\Omega\subset [0,T]\times [0,2L]$ to the set $\{0,1\}$, to indicate that whether the flow is constrained. In this case we take $\mathbb{H}$ to be $1$ on $\Omega=[t_1,T]\times [0,L]\cup [0,t_1)\times [0,2L]$ and $0$ elsewhere. This indicates that we have restricted the equation set to hold on the left hand side of the pipe after the valve closes. We now modify \eqref{pdep1} to capture these dynamics so we are analyzing the equation set:
\begin{subequations}\label{pdep2}
\begin{align}
&(\partial _t\rho + \partial_x q)\cdot\mathbb{H}(\Omega) = 0,\\
&\left[\partial_tq + \partial_x\left(\frac{q^2}{\rho} + p\right)\right]\cdot\mathbb{H}(\Omega) = \left(-\frac{c_fq|q|}{2D\rho}\right)\cdot\mathbb{H}(\Omega),\\
&(\rho(0,x),q(0,x))=(\rho_0(x),q_0(x)),\\
&\rho(t,0) = \rho_1(t) \quad t\in[0,T], \label{bcleft}\\
&\rho(t,2L) = \rho_2(t) \quad t\in[0,t_1),\quad q(t,L)=0 \quad t\in [t_1,T]. \label{bcright}
\end{align}
\end{subequations}
Functions $(\rho_0,q_0)$ and $(\rho_1,\rho_2)$ are given. The boundary condition \eqref{bcright} indicates the hard boundary of the valve - that is there is no new flow at $x=L$ after a certain time. The boundary condition \eqref{bcleft} indicates the reservoir. We assume throughout this paper that the system is hyperbolic before the valve closes, and hyperbolic in the weak sense after the valve closes. The definition of hyperbolic is reviewed below equation \eqref{hivp} in the next section. 

In the appendix of \cite{BastinCoron2016book}, Theorem B.1 shows the well-posedness of the systems through Riemann coordinates, such that if the conditions $(\rho_0,q_0)$ and $(\rho_1,\rho_2)$ are given the system \eqref{pdep2} for some $t_1$ the system is well posed with $(\rho,\frac{q}{\rho})$ in $C^{\infty}([0,t_1)\times [0,2L])^2$. Therefore in our analysis we assume $(q_0,\rho_0)$ are chosen so that $u=(\rho,\frac{q}{\rho})$ with $\rho\neq 0$ is in $C^{\infty}([0,t_1)\times [0,2L])^2$ before the valve closure and in $H^{-1}([t_1,T]\times [0,L])^2$ after the valve closure, which given \cite{BastinCoron2016book} is a good space to make sense of the solution. This assumption is discussed more throughly in Section \ref{regularitydiscussion}. Through abuse of notation we write $C^{\infty}([0,t_1)\times [0,2L])^2\cup H^{-1}([t_1,T]\times [0,L])^2$   as the space of the solution even though we mean the spaces before and after the valve closure. We construct a numerical Lax-Wendroff scheme for \eqref{pdep2} which has a rigorous error analysis over the engineering ODE model proposed by \cite{SteinebachRosenSohr2012, KausTren18} with modelling based on method of characteristics. This new dynamical equation set \eqref{pdep2} could work for gas or petroleum pipelines industry c.f. \cite{dafermos2005hyperbolic, stiriba2002nonlinear, baudin2005relaxation}, and could also refer to blood flowing through artificial valves in the human heart \cite{gomez2017analysis,quateroni2006,liangfuyou2009,Luskin1982THEEO}. 

The main contribution of this paper is therefore twofold. The first is consistency and stability of the Lax-Wendroff scheme with the inclusion of a term which represents friction. It should be noted in \cite{Gallouet2022} that general principles for Lax-Wendroff type convergence of a finite volume scheme have been established for Euler equations in low dimensions, and in the case of Saint-Venant in 2d \cite{Herbin2021}. Our method is different in this specific case and also because of the presence of the friction term. This  term (\eqref{pdep1}, $a=1$) makes it analytically more difficult to prove convergence.  

The second and most important contribution is the analysis of the valve closure as being an instantaneous boundary condition, formulated by \eqref{pdep2}. While shocks have been analyzed extensively, fast valve closure has not appeared in the literature before. It was unclear to the authors how using another finite volume scheme such as in \cite{Gallouet2022} or \cite{Herbin2021} that the rapid valve closure could be incorporated. We show that the numerical solution still converges in the weak sense. This result is exciting because it shows that rapidly moving valves, such as pipe and heart valves, can be analyzed this way in the future. %
%

The outline of this article is as follows: In Section \ref{schemeS}, we rewrite the system in a quasi-linear form and describe the finite difference operator we use. We introduce the main theorems in Section \ref{mainS}. In Section \ref{CS}, we show the consistency and the stability of the scheme which was proposed in Section 2, and prove Theorem \ref{main}. We specify in Section \ref{bcS} the numerical version of the boundary conditions that are needed before and after the influence of the valve on the system. In Section \ref{weakS}, we prove the convergence of the numerical solution to the analytical solution with discontinuity in distributional sense, and this is our second main result which is Theorem \ref{main2}. Finally, in Section \ref{simul} we present some simulations by applying the conditions discussed in Section \ref{bcS} and show the convergence we achieved. 

\section{Numerical scheme setup}\label{schemeS}
In this section, we first consider the general Saint-Venant model with semilinear friction term without boundary conditions, which means $-\infty<x<\infty$, and it also works when a pipe connects to 2 reservoirs at each end, discussed in Section \ref{bcS}. The effect of the valve switching closed (instantaneously changing boundary condition) will be discussed later also in \ref{bcS}.\\

Consider the equation \eqref{pdep1} with $a=1$, If we substitute $q$ with $\rho \cdot v$ into (\ref{pdep2}), and with the linear pressure law $p(\rho) = p_a + K\frac{\rho - \rho_a}{\rho_a}$, we have the partial differential equation
\begin{align}\label{pde3}
\partial_t\left(\begin{matrix}
\rho\\v
\end{matrix}\right) + \left(\begin{matrix}
v&\rho\\\frac{K}{\rho_a\rho}&v
\end{matrix}\right)\partial_x\left(\begin{matrix}
\rho\\v
\end{matrix}\right) = \left(\begin{matrix}
0\\-\frac{c_f}{2D}v|v|
\end{matrix}\right).
\end{align}
Let $u = \left(\begin{matrix}
\rho\\v
\end{matrix}\right)$, $A(u) = \left(\begin{matrix}
v&\rho\\\frac{K}{\rho_a\rho}&v
\end{matrix}\right)$, and $B(u) = \left(\begin{matrix}
0\\-\frac{c_f}{2D}v|v|
\end{matrix}\right)$, then we can reformulate our system as
\begin{align}\label{hivp}
\begin{cases}
\partial_tu(t,x) &= -A(u(t,x))\partial_xu(t,x) + B(u(t,x)),\\
u(0,x) &= u_0(x).
\end{cases}
\end{align}
This formulation of the system is needed to apply the theorem in \cite{Strang1964}. We assume that our system is hyperbolic, that is \eqref{hivp} has matrix $A$ with distinct eigenvalues of opposite sign for all $(t,x)$. Let $t = n\Delta t, \ \ x = j\Delta x, \ \ n,j\in \mathbb{N}$ where $U_j^n=U(n\Delta t, j\Delta x)$ stands for the numerical solution for each space and time step.  We want to approximate the equation (\ref{hivp}) by a finite difference scheme $\varphi$:
\begin{align}\label{fds}
U(t+\Delta t, x) = \varphi\left(\{U(t, x \pm  j\Delta x)\}_{j\in\mathbb{N}},\Delta t, t, \Delta x, x\right).
\end{align}
The basic idea of the Lax-Wendroff method is to expand the solution of the equation up to the second order in time $t$, and then to use space derivatives to replace the time derivative. This is done through an intermediate step of constructing a function $U(t,x)$ close to the solution $u(t,x)$, which when evaluated at the grid points is referred to as the numerical solution $U(n\Delta t,j\Delta x)$. Through out this article, we use explicit central schemes for approximating the space derivative, that is for a solution $v$ to a partial differential equation we have 
\begin{equation}\label{centraldiff1}
\partial_xv(t,x) = \frac{v(t,x + \Delta x) - v(t, x - \Delta x)}{2\Delta x} + O((\Delta x)^2),
\end{equation}
\begin{equation}\label{centraldiff2}
	\partial_x^2v(t,x) = \frac{v(t,x + \Delta x) - 2v(t,x) + v(t, x - \Delta x)}{\Delta x^2} + O((\Delta x)^2).
\end{equation}
which requires $v$ to be $C^{\infty}$ if interpreted in the classical sense for repeated use of the finite difference method. However for the nonlinear Lax-Wendroff method the computations are complicated. The complications occur because for the full partial differential equation ($a=1$) it is difficult to compute the convergence conditions. Secondly unless we use \cite{Strang1964} it is not necessarily true as in the linear case that consistency and stability implies convergence.  We recall two definitions that will help us apply Strang's theorem from \cite{Strang1964} in Section \ref{CS}. As part of Section \ref{CS}, we will show the scheme \eqref{fds} is consistent for the equation \eqref{hivp}. First we start by defining an expansion for a candidate $U(t,x)$ approximation of a solution to a smooth quasi-linear hyperbolic equation as follows:
\begin{equation}\label{principalerr}
	U(t,x) := u(t,x) + \sum_{k \ge 1}^\mathcal{K} (\Delta t)^k  V_k(t,x), \ \ \ k = 1,2,3,...,
\end{equation}
where $V_k$ is a series of error terms existing up to some power $\mathcal{K}$, classified by the orders of $(\Delta t)^k$. The power $\mathcal{K}$ depends on what numerical scheme is used. Of course this expansion cannot be true for all values of $t,x$ and $\Delta t$. The cone in which this expansion holds uniformly is dictated by the so-called Courant-Friedrich-Lewy (CFL) condition. 

Therefore from the scheme (\ref{fds}), we know that the discrete time evolution of the solution is derived by a finite difference scheme $\varphi$, with the scheme $\varphi$ being also expressed as
\begin{align}\label{fds2}
	\varphi(\{U(t,x \pm j\Delta x)\}_{j\in\mathbb{N}},\Delta x,\Delta t) &= U(t,x) + \Delta t \partial_tU(t,x) + O((\Delta t)^2)\\ \nonumber
	&= U(t,x) + \Delta t(-A(U)\partial_xU + B(U))+O((\Delta t)^2)
\end{align}
so that we can obtain the truncation error of the scheme by substituting every $U$ in the scheme \eqref{fds2} by (\ref{principalerr}) where
\begin{align*}
U(t+\Delta t,x) = \varphi(\{U(t,x \pm j\Delta x)\}_{j\in\mathbb{N}},\Delta x,\Delta t). 
\end{align*}
with $U(x,0,\Delta t)=u_0(x).$ In order to make this scheme work, instead of the natural numbers, let us a consider a subset $\mathbb{L}$. Let us denote $\mathbb{L}$ the set of points $l_1, . . . ,l_s$ where $\mathbb{L}$ is a subset of the set $\mathcal{L}$ of lattice points with integer coordinates in space. Let $\mathbb{L}^h$ be the convex hull of $\mathbb{L}$ in this space and fix the ratio $r=\Delta t/\Delta x$. Then given a point $(t_0,x_0)$ we define the closed cone 
\begin{align}
\mathcal{C}_0=\{(x,t)|\, 0\leq t\leq t_0,\,\, r(x-x_0)/(t_0-t)\in \mathbb{L}^h\}
\end{align}
and the intersection with the mesh of width $\Delta x$. Then we get 
\begin{align}
\mathcal{M}(t,\Delta x)=\{(t,x)| \,(t,x)\in \mathcal{C}_0,\,\, x=x_0+l\Delta x, l\in \mathcal{L}\}
\end{align}
Calculating $U(t_0,x_0,\Delta t)$ where usually the $\Delta t$ is suppressed where it is understood, involves only the points in $\mathcal{M}(t_0-i\Delta t,\Delta x)$ with $i=1, . . ,t_0/\Delta t$ and that as $\Delta t\rightarrow 0$ these points are dense in $\mathcal{C}_0$. Under some hypothesis on $\mathcal{C}_0$ we want to show 
\begin{align}
\lim\limits_{\Delta t\rightarrow 0}U(t_0,x_0,\Delta t)=u(t_0,x_0).  
\end{align}
To this end we start by Taylor expanding $B(\cdot)$
\begin{align}\label{nonlinexpan}
B\left(u + \sum_{k \ge 1}^\mathcal{K} (\Delta t)^kV_k(t,x)\right) = B(u) + \frac{\partial B(u)}{\partial u}\cdot\left(\sum_{k \ge 1}^\mathcal{K} (\Delta t)^kV_k\right) + \beta_k(t,x)
\end{align}
and then substituting (\ref{nonlinexpan}) into (\ref{hivp}), and using \eqref{principalerr} and the definition of $\varphi$ we have a series of systems classified by the order of $(\Delta t)^k$ such that
\begin{equation}
\begin{cases}
	V_k(t+\Delta t,x) = V_k(t,x) - \Delta tA(u)\partial_xV_k(t,x) + \Delta t\frac{\partial B(u)}{\partial u}V_k + \Delta t\cdot \beta_k(t,x)\\
	V_k(0,x) = 0, \ \ \  k = 1,2,3,...
\end{cases}
\end{equation}
Here $\beta_k$ is the remainder of the Taylor's expansions from (\ref{nonlinexpan}), being a nonlinear inhomogeneous term for each system of $V_k$. We recall the first of our two definitions:
\begin{defn}
The order of accuracy $p$ of the difference operator is the index $j$ of the first non-vanishing principal error term $V_j(t,x)$.
\end{defn} 
To make sure the finite difference scheme is $\ell_2$-stable, we must verify that the $\ell_2$-norm of its first variation is bounded. Therefore we introduce the Gateaux derivative of a vector function. The finite difference scheme we use here is a function of a series of $U(t,x + j\Delta x)$, in addition to the space and time step $\Delta x$ and $\Delta t$. So we let
\begin{equation}\label{Gateaux}
	d\varphi(\mathcal U) := \lim_{\epsilon \rightarrow 0}\frac{\varphi(\mathcal U + \epsilon f) - \varphi(\mathcal U)}{\epsilon},
\end{equation}
where $\varphi \in \mathbb{R}^2$, 
\begin{align*}
\mathcal U(t,x):= (\rho(t,x - \Delta x), \rho(t,x), \rho(t,x + \Delta x), v(t,x - \Delta x),v(t,x), v(t,x + \Delta x))^T,
\end{align*}
\begin{align*}
f := (f_1(x - \Delta x), f_1(x), f_1(x + \Delta x),f_2(x - \Delta x),f_2(x),f_2(x + \Delta x))^T.
\end{align*}
Now we recall the following definition from \cite{Strang1964}: 
\begin{defn}\label{defnfv}
	Let $c_j$ be the Jacobian of $\varphi$ in \eqref{fds} with respect to its $j^{th}$ argument $U(t,x + j\Delta x)$, $j = 1,2,3,...$, then the first variation of $\varphi$ with respect to the argument vector $U(t,x)$ is defined by 
	\begin{equation}\label{Jac}
	(M(t,\Delta x) f)(x) := d\varphi(U) = \sum_{j=1}^{s}c_j(u(t,x),...,u(t,x),x,t,0)f(x + j\Delta x).
	\end{equation}
\end{defn}
We can think of $f$ and $Mf$ as defined on $\mathcal{M}(t,\Delta x)$ and $\mathcal{M}(t+\Delta t,\Delta x)$ respectively. 

\section{Statement of the Main Theorems}\label{mainS} 

The first theorem is a generic convergence result of the scheme on the problem \eqref{pdep1} or \eqref{pdep2} before the valve closure. Let $L^*>0$ be a real number. 
\begin{thm}\label{main}
Assume the solution $u=(\rho,\frac{q}{\rho})$ to \eqref{pdep1} with $a=1$  is in $ C^{\infty}([0,T]\times [0, L^*])^2$. Let $\varphi$ be the finite difference operator of the 1d quasi-linear hyperbolic equation \eqref{hivp} corresponding to the approximate numerical solution $U(t,x)$ of the real solution $u(t,x)$ which satisfies the iteration 
	\begin{equation*}
	U(t+\Delta t, x) = \varphi\left(\{U(t, x \pm j \Delta x)\}_{j\in\mathbb{N}},\Delta t, t, \Delta x, x\right).
	\end{equation*}
The finite difference scheme $\varphi$ is given by the Lax-Wendroff method with $u(t + \Delta t,x)$ which is Taylor expandable in $t$ up to the second order
	\begin{equation*}
		u(t + \Delta t, x) = u(t,x) + \Delta t\cdot \partial_tu(t,x) + \frac{\Delta t^2}{2}\partial_t^2u(t,x) + O((\Delta t)^3)
	\end{equation*}
on $[0,T]\times [0, L^*]$. Then the finite difference operator $\varphi$ is consistent with accuracy order $p = 2$, and its first variation is $\ell_2$-stable with proper CFL-condition on $\Delta t$ and $\Delta x$. Furthermore the constructed $U(t,x)$ is equal to the actual solution modulo an error, that is
	\begin{equation}
		U(t_0,x_0) = u(t_0,x_0) + O((\Delta t)^2).
	\end{equation}
for $(t_0,x_0)$ inside the cone dictated by the CFL condition.  
\end{thm}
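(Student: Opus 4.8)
The plan is to establish the three claimed properties — second-order consistency, $\ell_2$-stability of the first variation, and the resulting $O((\Delta t)^2)$ convergence — in that order, and then to invoke Strang's theorem from \cite{Strang1964} to combine the first two into the third. The reason convergence does not follow from the classical Lax equivalence theorem is precisely the quasi-linearity together with the friction term $B(u)$; Strang's framework is designed for exactly this nonlinear setting, with the linearization captured by the first variation $M(t,\Delta x)$ of Definition \ref{defnfv}.

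For consistency, I would first derive the explicit Lax-Wendroff operator $\varphi$. Starting from the second-order Taylor expansion in the hypothesis and using \eqref{hivp}, the time derivatives are converted into space derivatives via
\[
\partial_t u = -A(u)\partial_x u + B(u), \qquad \partial_t^2 u = -\partial_t A\,\partial_x u - A\,\partial_x(\partial_t u) + \frac{\partial B}{\partial u}\,\partial_t u,
\]
where every remaining $\partial_t$ on the right is again replaced using the first identity, yielding a purely spatial expression in $u$, $A(u)$, its $u$-derivatives, and $B(u)$. Substituting the central-difference formulas \eqref{centraldiff1}--\eqref{centraldiff2} for $\partial_x$ and $\partial_x^2$ then defines $\varphi$ on the lattice. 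To read off the order of accuracy, I substitute the ansatz \eqref{principalerr} into \eqref{fds2} and collect powers of $\Delta t$. The key point is to verify that the equation governing $V_1$ (the coefficient of $\Delta t$) is satisfied identically by the true solution, forcing $V_1 \equiv 0$; the second-order Taylor matching built into Lax-Wendroff is exactly what produces this cancellation, so that the first surviving principal error term is $V_2$, giving $p = 2$ by the order-of-accuracy definition.

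For stability, I would compute the first variation $M(t,\Delta x)$ by differentiating $\varphi$ through the Gateaux derivative \eqref{Gateaux}, freezing coefficients at the smooth solution $u(t,x)$ as in \eqref{Jac}. This produces a linear, frozen-coefficient three-point scheme, to which von Neumann analysis applies: passing to the Fourier variable, one obtains a $2\times 2$ amplification matrix $G$ whose entries depend on $A(u)$, the ratio $r = \Delta t/\Delta x$, and on $\Delta t\,\frac{\partial B}{\partial u}$. Since $A$ has real, distinct eigenvalues of opposite sign by hyperbolicity, the Lax-Wendroff amplification factor of the principal part obeys $|\lambda(G)| \le 1$ precisely when the CFL condition $r\,\max|\lambda(A)| \le 1$ holds, and the friction contribution enters only at order $\Delta t$, perturbing this to $|\lambda(G)| \le 1 + C\Delta t$, the admissible bound for $\ell_2$-stability. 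Establishing the \emph{uniform} spectral estimate — and, because $G$ is not normal, upgrading it to a genuine $\ell_2$-operator-norm bound via a symmetrizer for $A$ (or the Kreiss matrix theorem) — is the step I expect to be the main obstacle, since the friction term destroys the clean algebraic structure available in the frictionless case $a=0$.

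Finally, with $p=2$ consistency and $\ell_2$-stability of $M(t,\Delta x)$ in hand, and with the Taylor remainders $\beta_k$ from \eqref{nonlinexpan} controlled uniformly on the CFL cone $\mathcal{C}_0$ using the assumed $C^\infty$ regularity of $u$ (which in particular keeps the semilinear term $v|v|$ smooth along the solution), the hypotheses of Strang's theorem are met. Its conclusion then yields $\lim_{\Delta t \to 0} U(t_0,x_0,\Delta t) = u(t_0,x_0)$ together with the quantitative rate dictated by the order of accuracy, namely $U(t_0,x_0) = u(t_0,x_0) + O((\Delta t)^2)$ for $(t_0,x_0)$ inside the cone, which is the assertion of the theorem.
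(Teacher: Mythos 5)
Your overall architecture---second-order consistency, $\ell_2$-stability of the first variation, then Strang's theorem to convert the two into the $O((\Delta t)^2)$ pointwise error inside the CFL cone---is exactly the paper's (Proposition \ref{main2prop}, Proposition \ref{mainprop}, Lemma \ref{lemmal2}, and the quoted subset of Strang's theorem). Your consistency step matches Proposition \ref{main2prop} essentially verbatim: convert time derivatives to space derivatives via the PDE, discretize with central differences, insert the ansatz \eqref{principalerr}, and check that the $k=1$ inhomogeneity vanishes while the $k=2$ one does not, so $p=2$. Where you genuinely diverge is the stability proof. You propose von Neumann analysis of the frozen-coefficient amplification matrix plus a symmetrizer or Kreiss-matrix-theorem upgrade to handle non-normality. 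The paper instead computes the Gateaux derivative $d\varphi(\mathcal{U})$ explicitly, freezes the arguments as in Definition \ref{defnfv}, and bounds the resulting operator term by term with the triangle inequality and $L^\infty$ bounds on $(\rho,v)$ (using subsonicity, $|v|<\sqrt{K/\rho_a}$ and $\rho_{min}<\rho<\rho_{max}$), obtaining $\|M\|_{L^2}\le 1+O_{L^\infty}(\Courant^2)$ under the requirement that the Courant number be \emph{small}, $\frac{\Delta t}{\Delta x}\sqrt{K/\rho_a}\le\varepsilon(\|u\|_{L^\infty})<1$, before invoking Lemma \ref{lemmal2}. The trade-off is clear: your Fourier route would deliver the sharp CFL condition $r\max|\lambda(A)|\le 1$, but the step you flag as ``the main obstacle'' is real and is in fact two obstacles: besides non-normality, the first variation \eqref{Jac} is a \emph{variable-coefficient} operator (the $c_j$ depend on $x$ through $u(t,x)$), so pointwise bounds on the frozen symbol do not by themselves yield an $\ell_2$ operator bound; you would need a Lax--Nirenberg-type theorem or a smoothly varying symmetrizer built from the eigenvectors of $A(u(t,x))$ to close that step. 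The paper's cruder direct estimate applies to the variable-coefficient operator as written and keeps the friction term's $O(\Delta t)$ contributions explicit, at the price of the stronger small-Courant-number restriction. With either stability argument completed, your final appeal to Strang is identical to the paper's proof of Theorem \ref{main}.
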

This is the first time the Lax-Wendroff scheme has been shown to be consistent and accurate for the Saint-Venant or pipeline equation with a semi-linear friction term. The reason for this choice of numerical model/technique is that there is no straightforward way of showing consistency and stability implies convergence for this problem without using Strang's methods \cite{Strang1964}. 

For the convenience of computation, we put the equation set \eqref{hivp} in divergence form by setting
\begin{align*}
	w := \left(\begin{matrix}
	\rho\\q
	\end{matrix}\right),\ 
	F(w) := \left(\begin{matrix}
	q\\q^2/\rho + p(\rho)
	\end{matrix}\right),\ 
	G(w) := \left(\begin{matrix}
	0\\\frac{-c_fq|q|}{2D\rho}
	\end{matrix}\right).
\end{align*}
We refer to $u=(\rho,\frac{q}{\rho})$ from Theorem \ref{main} as the normalized solution. 
Since Theorem \ref{main} describes the dynamics before the valve closure, we now need to describe the dynamics after the valve closure. 
\begin{thm}\label{main2}
Let $w\in H^{-1}([t_1,T]\times [0,L])^2\cup C^{\infty}([0,t_1)\times [0,2L])^2$, be a  solution of the PDE \eqref{pdep2} that satisfies 
\begin{align*}
&\int\limits_{t_1}^T\int\limits_{0}^L w\cdot\partial_t\varphi + F(w)\cdot\partial_x\varphi- G(w)\cdot \varphi \,dx\,dt + \int\limits_0^{L}w(t_1,x)\cdot\varphi(t_1,x)dx \\&+ \nonumber
	 \int\limits_0^Lw(t_1,x)\cdot\varphi(t_1,x)dx + \int\limits_{t_1}^TF(w(t,L))\varphi(t,L)dt = 0
\end{align*}
	for any test function $\varphi(t,x) = (\varphi_1(t,x)\ \varphi_2(t,x))^\top$ with $\varphi_1,\varphi_2 \in C_0^\infty([0,T]\times [0,2L])$. Let $\{U_j^n\}_{(n,j)\in I}$, $I=\{0,1,2, . . . ,N_x\}\times \{0,1,2, . . . ,N_t\}$ be a set of numerical solutions computed from the Lax-Wendroff scheme (corresponding to $u$) with the initial condition vector given by,
	$$\{U_j^0\}_{0 \le j \le N_x} = \{\lim\limits_{t\rightarrow t_1^-}u(t,j\Delta x)\}_{0 \le j \le N_x},$$ 
At the time of closing of the valve the scheme is updated using the conditions
	\begin{align*}
	\{[U_{0}^n]_1\}_{0 \le n \le N_t} = \{u_1(n\Delta t,0)\}_{0 \le n \le N_t}, \quad \{[U_{N_x}^n]_2\}_{0 \le n \le N_t} = 0,\quad \lim\limits_{t\to t_1^-}\partial_x\rho(t,L)=0
	\end{align*} 
where $u_1$ is given by the reservoir at $x=0$ and $\lim\limits_{t\to t_1^-}\partial_x\rho(t,L)=0$ evaluated at the grid points.
 Suppose there exists a constant $C_h$ such that 
	\begin{align*}
	\lim\limits_{t\to t_1^-}\|u(t,\cdot)\|_{L^\infty([0,L])} \le C_h, \quad 
	|U^n_j| \le C_h\ \ \  \forall n,j \in \mathbb{N}, \quad 
		\lim\limits_{t \rightarrow t_1^-}\|\partial_xF(w)\|_{L^1([0,2L])} \le C_h,
	\end{align*}
	then the numerical solution set $\{U_j^n\}_{(n,j)\in I}$ converges to the weak solution $u(t,x)$ of order 1 in the distributional sense when $\Delta t, \Delta x \rightarrow 0$ under the generalized CFL condition, $\frac{\Delta t}{\Delta x} \le C_{CFL}$ for some constant $C_{CFL} < \infty$, specified in Proposition \ref{mainprop} and computed explicitly.  
\end{thm}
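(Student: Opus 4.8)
The plan is to prove weak convergence by a Lax--Wendroff-style consistency argument, but adapted to the presence of the instantaneous boundary condition at the valve. The central object is the discrete weak form obtained by multiplying the scheme by a test function $\varphi$ and summing against the mesh, and the goal is to show that this discrete functional converges, as $\Delta t,\Delta x\to 0$, to exactly the continuous weak form in the statement, with the three boundary integrals appearing as the limits of boundary sums. First I would fix the divergence-form scheme: write the Lax--Wendroff update $U_j^{n+1}$ in terms of the numerical flux built from $F(w)$ and the source $G(w)$, following the construction in Section \ref{schemeS}. I would then define the discrete bilinear pairing $S_{\Delta}(\varphi)=\sum_{n,j}\Delta t\,\Delta x\,\big(U_j^n\,\tfrac{\varphi_j^{n+1}-\varphi_j^n}{\Delta t}+\text{(flux terms)}\big)$ and perform summation by parts (discrete Abel summation) in both $n$ and $j$, which is the discrete analogue of integration by parts.

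The second step is to identify and control the boundary contributions produced by the summation by parts. Summing in time from $n=n_1$ (the valve-closure index, $t_1=n_1\Delta t$) up to $N_t$ produces an initial term $\sum_j U_j^{n_1}\varphi_j^{n_1}\Delta x$, which is precisely the discretization of $\int_0^L w(t_1,x)\varphi(t_1,x)\,dx$; the doubling of this term in the continuous statement reflects the matching of the pre-valve smooth solution to the post-valve weak solution across $t=t_1$, so I would track both the limit from the scheme initialized at $t_1^-$ and the contribution carried by the flux difference across the interface. Summing in space up to $j=N_x$ (the point $x=L$) produces the spatial boundary term $\sum_n F(U_{N_x}^n)\varphi_{N_x}^n\Delta t$, matching $\int_{t_1}^T F(w(t,L))\varphi(t,L)\,dt$; the condition $[U^n_{N_x}]_2=0$ (zero flow at the valve) and $\lim_{t\to t_1^-}\partial_x\rho(t,L)=0$ are exactly what is needed to make this boundary flux well defined and consistent. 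The reservoir condition $[U_0^n]_1=u_1$ fixes the left endpoint. Here I would invoke the uniform bounds $|U_j^n|\le C_h$, the $L^\infty$ bound on the trace $u(t_1^-,\cdot)$, and the $L^1$ bound on $\partial_x F(w)$ to guarantee that each Riemann-type sum converges to its integral and that no mass escapes.

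The third step is the passage to the limit itself. By the order-1 consistency of the divergence-form Lax--Wendroff flux (a direct Taylor/central-difference estimate of the type \eqref{centraldiff1}--\eqref{centraldiff2}, now only first order because the post-valve solution lives in $H^{-1}$ rather than $C^\infty$), each interior sum differs from the corresponding integral by a term that is $O(\Delta x)+O(\Delta t)$ against the fixed smooth test function, using that $\varphi,\partial_t\varphi,\partial_x\varphi$ are uniformly continuous on the compact support. The uniform bound $|U_j^n|\le C_h$ lets me dominate the integrands and apply a discrete dominated-convergence argument so that $S_\Delta(\varphi)\to 0$, which is exactly the continuous weak form. The explicit CFL threshold $C_{CFL}$ governing stability of the flux is the one produced in Proposition \ref{mainprop}, and I would simply cite it to justify that $U_j^n$ remains in the bounded regime throughout the iteration.

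The main obstacle I anticipate is the interface at $t=t_1$: the solution is only $C^\infty$ on $[0,t_1)$ and merely $H^{-1}$ afterward, so the boundary-in-time terms from summation by parts must be shown to converge even though the relevant traces of $w$ and $F(w)$ are low-regularity. Making the single $L^1$-in-space flux bound $\lim_{t\to t_1^-}\|\partial_x F(w)\|_{L^1}\le C_h$ do the work of controlling the discontinuous flux across the valve closure — and reconciling the two seemingly duplicated $\int_0^L w(t_1,x)\varphi(t_1,x)\,dx$ terms as the correct matched contribution — is the delicate point, and it is where the argument departs from the classical smooth Lax--Wendroff theorem and from \cite{Gallouet2022, Herbin2021}.
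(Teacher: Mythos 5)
Your overall architecture coincides with the paper's: multiply the scheme by a (locally averaged) test function, sum over the mesh, perform summation by parts in both $j$ and $n$, identify the boundary sums at $x=L$ and $t=t_1$ with the boundary integrals of the weak form, and pass to the limit using $|U_j^n|\le C_h$ and the CFL condition of Proposition \ref{mainprop}. The problem is that your proposal defers exactly the two steps that constitute the technical content of the paper's proof, and as written these are genuine gaps rather than routine details. First, at the cell containing $(t_1,L)$ the flux $F(W)$ is discontinuous, so the spatial boundary sum cannot simply be declared a convergent Riemann-type sum: the paper must excise the offending term $F(W^1_{N_x-1})\cdot\varphi^1_{N_x-1}\,\Delta t$ from the summation-by-parts identity and show separately that its contribution vanishes, by rewriting it as a discrete $x$-derivative of $F$ paired with the test function and estimating it by $\Delta t\cdot C_{\Omega_{N_x}^1,F}\,\|\varphi\|_{\Omega_{N_x}^1,1}\to 0$; this excision-plus-distributional-order estimate is precisely where the hypothesis $\lim_{t\to t_1^-}\|\partial_x F(w)\|_{L^1}\le C_h$ enters. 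Saying that this bound must ``do the work'' names the difficulty without supplying the mechanism that resolves it.

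Second, your account of the doubled term $2\int_0^L w(t_1,x)\cdot\varphi(t_1,x)\,dx$ would not survive being written out. In the paper the two copies have two distinct discrete origins: one is the Abel boundary term from summation by parts in time over $n\ge 2$ (this is the copy you identify), and the other comes from treating the first time layer $S_{t,2}$ as a separate sum, in which the scheme at the closed valve reduces to the Lax--Wendroff discretization of the linear transport system with matrix $\tilde A$, and the second-order term is killed by the estimate $\frac{\Delta t}{2}\,|\langle \tilde A^2\partial_x^2 w,\varphi\rangle| \le \frac{\Delta t}{2}\, C_{\Omega_0,w}\,\|\varphi\|_{\Omega_0,2}\to 0$. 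Your phrase ``the contribution carried by the flux difference across the interface'' is too vague to check and does not correspond to any term your own summation by parts produces: summing by parts in time over $n\ge n_1$ yields exactly one initial-layer boundary term, so the second copy has to be manufactured by an additional, separate argument, which is missing. A smaller but related omission: the paper's test functions are cell averages over the five-cell stencil \eqref{testfunc}, which is what makes the stray products such as $F(W_0^n)\cdot\varphi_1^n$ and $F(W_1^n)\cdot\varphi_0^n$ vanish identically; with raw point values $\varphi(n\Delta t, j\Delta x)$ these terms require their own estimate. The interior consistency and dominated-convergence part of your plan is fine, but it is not where the proof lives.
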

The specification of the numerical conditions to match the reservoir is given in Subsection \ref{BCbefore}. 

\section{Dynamics before the valve closure}\label{CS}
In this section, we will analyze  the consistency of the finite difference scheme $\varphi$ in (\ref{fds}) and the stability of the approximation solution by Lax-Wendroff method, which will result in the proof of Theorem \ref{main}.  Historically, it has already been proved that under a proper CFL condition, the Lax-Wendroff scheme is consistent and stable when applied to some linear partial differential equations, such as the advection equation \cite{Allairebook2007}, but in the nonlinear area it has not been fully used yet, especially with changing boundary conditions. Also unlike linear PDEs, \cite{Tadmor2012review} pointed out that the challenges may vary between different nonlinearities to prove the equivalence of stability and convergence, but still linear stability carries over to the nonlinear setup according to the celebrated theorem of Strang \cite{Strang1964}. Therefore, we only need the implication that consistent and $\ell_2$-stable properties imply convergent in order to conclude convergence of our modified Lax-Wendroff scheme. This implication is an explicit application of an abstract result of  \cite{Strang1964}. This section follows the notation of \cite{Strang1964} closely. 

By Taylor expanding, we have that
\begin{equation}\label{titeration}
	\begin{cases}
	\rho(t + \Delta t,x) = \rho(t,x) + \Delta t \cdot \partial_t\rho(t,x) + \frac{(\Delta t)^2}{2}\partial^2_t\rho(t,x) + O((\Delta t)^3)\\
	v(t + \Delta t,x) = v(t,x) + \Delta t \cdot \partial_tv(t,x) + \frac{(\Delta t)^2}{2}\partial^2_tv(t,x) + O((\Delta t)^3).
	\end{cases}
\end{equation}
Again we have that
\begin{equation}\label{firsttd}
	\begin{cases}
	\partial_t\rho = -\rho\partial_xv - v\partial_x\rho\\
	\partial_tv = -v\partial_xv - \frac{K}{\rho_a}\frac{1}{\rho}\partial_x\rho - Cv|v|,
	\end{cases}
\end{equation}
so the second derivatives of $t$ derived from (\ref{firsttd}) are
\begin{equation}\label{secondtd}
\begin{cases}
	\partial^2_t\rho = 2\rho(\partial_xv)^2 + 2\rho v\partial^2_xv + 4v\partial_xv\partial_x\rho + 2C|v|\rho\partial_xv + Cv|v|\partial_x\rho \\
	+ \left(\frac{K}{\rho_a} + v^2\right)\partial^2_x\rho\\
	\partial^2_tv = 2v(\partial_xv)^2 + \left(\frac{K}{\rho_a} + v^2\right)\partial^2_xv + 5Cv|v|\partial_xv + \frac{2K}{\rho_a\rho}\partial_x\rho\partial_xv - \frac{2Kv}{\rho_a\rho^2}(\partial_x\rho)^2 \\
	+ \frac{2Kv}{\rho_a\rho}\partial_x^2\rho + 2C|v|\frac{K}{\rho_a\rho}\partial_x\rho + 2Cv^3.
\end{cases}
\end{equation}
Therefore by substituting (\ref{firsttd}) and (\ref{secondtd}) back into (\ref{titeration}) and using the central difference scheme (\ref{centraldiff1}) and (\ref{centraldiff2}), we have the finite difference scheme (\ref{fds}), i.e., we can construct $\varphi\left(\{U(t, x \pm j \Delta x)\}_{j\in \mathbb{N}},\Delta t,  \Delta x\right)$. Now we have the following proposition: 
\begin{prop}\label{main2prop}
	Assume the solution to \eqref{pde3} is in $u\in C^{\infty}([0,T]\times [0,L])^2$. 
 The nonlinear Lax-Wendroff scheme on \eqref{hivp} is consistent with accuracy of order two on $[0,T]\times [0,L]$.
\end{prop}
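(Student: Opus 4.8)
The plan is to verify consistency with accuracy of order two directly, by showing that when the exact smooth solution $u=(\rho,v)$ is inserted into the finite difference scheme $\varphi$, the residual (truncation error) is $O((\Delta t)^3)$ in time, which—after dividing by $\Delta t$ as in the definition of the local truncation error—yields order $p=2$. Concretely, I would start from the exact Taylor expansion \eqref{titeration} in time up to second order, and then replace the time derivatives $\partial_t\rho,\partial_tv,\partial_t^2\rho,\partial_t^2v$ by their spatial expressions \eqref{firsttd} and \eqref{secondtd}, which are obtained by repeatedly differentiating the PDE \eqref{pde3} and using the equation to trade each $\partial_t$ for spatial derivatives. This produces an expression for $u(t+\Delta t,x)$ purely in terms of spatial derivatives of $u$ at time $t$, accurate to $O((\Delta t)^3)$.

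The next step is to recognize that the scheme $\varphi$ is built by substituting the central-difference approximations \eqref{centraldiff1} and \eqref{centraldiff2} for every spatial derivative appearing in \eqref{firsttd}–\eqref{secondtd}. I would therefore subtract the scheme value $\varphi(\{u(t,x\pm j\Delta x)\},\Delta t,\Delta x)$ from the exact expansion and track the error contributed by each replacement. Each first derivative $\partial_x$ is replaced with an $O((\Delta x)^2)$ error, and each second derivative $\partial_x^2$ likewise with an $O((\Delta x)^2)$ error; since in \eqref{titeration} the first-order spatial terms are multiplied by $\Delta t$ and the second-order terms by $(\Delta t)^2$, the combined truncation error is of the form $O(\Delta t (\Delta x)^2) + O((\Delta t)^2(\Delta x)^2) + O((\Delta t)^3)$. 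Fixing the ratio $r=\Delta t/\Delta x$ as in the cone construction, every term is $O((\Delta t)^3)$, so the truncation error divided by $\Delta t$ is $O((\Delta t)^2)$, giving accuracy order two. Here I would invoke the smoothness hypothesis $u\in C^\infty([0,T]\times[0,L])^2$ to guarantee that all the derivatives appearing in \eqref{secondtd}, and the remainders in the central-difference formulas, are uniformly bounded on the compact domain, so the $O(\cdot)$ constants are genuine.

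The main obstacle I anticipate is bookkeeping rather than conceptual: the expressions \eqref{secondtd} are long and contain the nonsmooth friction factor $v|v|$ (through $Cv|v|$ and $C|v|$), so I must check carefully that differentiating this term is legitimate and contributes only bounded quantities. Since $v|v|$ is $C^1$ with derivative $2|v|$, the terms involving $|v|$ in $\partial_t^2\rho$ and $\partial_t^2v$ are well defined wherever $v$ is smooth, and the only place extra care is needed is any point where $v=0$; there the one-sided derivatives still match because $v|v|$ is continuously differentiable. I would state this explicitly to justify that the friction term does not degrade the order. The remaining work is to collect the error terms by powers of $\Delta t$ and confirm that the first non-vanishing principal error term $V_j$ occurs at $j=2$ (consistent with Definition~1), which identifies $p=2$ and completes the proof.
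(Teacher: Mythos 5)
Your proposal is correct, but it argues the order of accuracy by a different mechanism than the paper. You do a classical, self-contained truncation-error count: insert the exact solution into the scheme built from \eqref{titeration}, \eqref{firsttd}, \eqref{secondtd} via the central differences \eqref{centraldiff1}--\eqref{centraldiff2}, track that each spatial replacement costs $O((\Delta x)^2)$ weighted by $\Delta t$ or $(\Delta t)^2$, fix the ratio $r=\Delta t/\Delta x$, and conclude the residual is $O((\Delta t)^3)$, i.e.\ order $p=2$ after dividing by $\Delta t$. The paper instead works inside Strang's formalism: it substitutes the expansion \eqref{principalerr}, $U = u + \sum_k (\Delta t)^k V_k$, into the scheme and reads off a hierarchy of linear transport systems for the error terms $V_k=(\varrho_k,\upsilon_k)$ with zero initial data; the $k=1$ system turns out to have vanishing source ($\beta_{1,1}=\beta_{2,1}=0$), forcing $V_1\equiv 0$, while the $k=2$ sources $\beta_{1,2},\beta_{2,2}$ are nonzero, so $V_2\not\equiv 0$ and $p=2$ by the definition of order of accuracy. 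The two arguments encode the same fact (your $O((\Delta t)^3)$ residual is precisely the statement that the first-order source terms vanish), but each buys something: yours is more elementary and makes the role of the mesh ratio explicit, whereas the paper's version produces the principal error terms $V_k$ in exactly the form that Strang's convergence theorem consumes later in the proof of Theorem \ref{main}, and it also verifies the \emph{non}-vanishing of $V_2$ --- which the definition strictly requires for $p$ to equal $2$; your upper bound on the residual only yields $p\ge 2$, so you should either carry out that nonvanishing check (as the paper does) or note that $p\ge 2$ suffices for the convergence application. Finally, your caution about the friction term is well placed but can be dispatched more cleanly: under the hypothesis $u\in C^{\infty}$, the equation \eqref{firsttd} expresses $Cv|v|$ as a sum of smooth functions, so the composite $v|v|$ is automatically smooth in $(t,x)$; alternatively, the $C^{1,1}$ regularity of $s\mapsto s|s|$ already makes $\partial_t^2 u$ Lipschitz in $t$, which is enough for the $O((\Delta t)^3)$ Taylor remainder in \eqref{titeration}.
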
 
\begin{proof}
	First, it is easy to verify that the Lax-Wendroff scheme for $\rho$ and $v$ is consistent from the computations \eqref{titeration},\eqref{firsttd} and \eqref{secondtd}. Then we can consider the expansion for the specific $U(t,x)=(P(t,x),V(t,x))$ as described previously by \eqref{principalerr} so that 
	\begin{align}\label{expanSV}
			P(t,x,\Delta t) &= \rho(t,x) + \sum^K_{k\ge 1}(\Delta t)^k \varrho_k,\\
			V(t,x,\Delta t) &= v(t,x) + \sum^K_{k\ge 1}(\Delta t)^k \upsilon_k. \nonumber
	\end{align}
	Then we can insert the expansion (\ref{expanSV}) into the finite difference operator $$\varphi\left(\{U(t, x \pm j \Delta x)\}_{j\in\mathbb{N}},\Delta t, t, \Delta x, x\right)$$ to get a Lax-Wendroff scheme on the pair of solutions $\rho(t,x)$ and $v(t,x)$ where
	\begin{align*}
		\rho(t + \Delta t, x) + \sum^K_{k\ge 1}(\Delta t)^k\varrho_k(t + \Delta t, x) &= \rho(t,x) + \sum^K_{k\ge 1}(\Delta t)^k\varrho_k(t,x) \\
		&+ \xi(\Delta t, \Delta x, \rho(t,x), v(t,x), \varrho_k, \upsilon_k) \\ \nonumber
	v(t + \Delta t, x) + \sum^K_{k\ge 1}(\Delta t)^k\upsilon_k(t + \Delta t, x) &= v(t,x) + \sum^K_{k\ge 1}(\Delta t)^k\upsilon_k(t,x) \\
	&+ \eta(\Delta t, \Delta x, \rho(t,x), v(t,x), \varrho_k, \upsilon_k)\nonumber ,
	\end{align*}
    and $\xi(\cdot)$, $\eta(\cdot)$ denote the error terms associated to these expansions.
    However, we should bear in mind that the Lax-Wendroff scheme solves the PDE only up to $(\Delta t)^2$, so that the nonlinear term in the series of the systems we should care about only occurs when $k = 1$ and $k = 2$. We examine these leading order terms as follows:	
	
	$Case \ \ k = 1$ we have 
	\begin{align*}
	\partial_t\rho_1(t,x) &= A_1(\rho,v,\partial_xv)\partial_x\rho_1(t,x) + \alpha_1(\cdot)\rho_1(t,x) + \beta_{1,1},\\
	\partial_tv_1(t,x) &= A_2(\rho,v,\partial_xv)\partial_xv_1(t,x) + \alpha_2(\cdot)v_1(t,x) + \beta_{2,1},\\
	\rho_1(0,x) & = 0,\\
	v_1(0,x) & = 0,
	\end{align*}
	and also	\\
	$Case \ \ k = 2$ we have
	\begin{align*}
	\partial_t\rho_2(t,x) &= A_1(\rho,v,\partial_xv)\partial_x\rho_2(t,x) + \alpha_1(\cdot)\rho_2(t,x) + \beta_{1,2},\\
	\partial_tv_2(t,x) &= A_2(\rho,v,\partial_xv)\partial_xv_2(t,x) + \alpha_2(\cdot)v_2(t,x) + \beta_{2,2},\\
	\rho_2(0,x) & = 0,\\
	v_2(0,x) & = 0.
	\end{align*}
	From the computation of each cases we can find that $\beta_{1,1} = \beta_{2,1} = 0$ but $\beta_{1,2} \neq 0$, $\beta_{2,2} \neq 0$, which indicates in the systems that $\rho_1 = v_1 = 0$ but $\rho_2$ and $v_2$ are not vanishing. So we conclude the order of the accuracy is $p = 2$.
\end{proof}
\subsection{Proof of Theorem \ref{main}}
We start by computing the Gateaux derivatives so that we can eventually compute the Jacobian. For the convenience of writing computations, we write $f^+$ and $f^-$ short for $f(x + \Delta x)$ and $f(x - \Delta x)$. Also because $\varphi$ is a 2-element column vector, we denote each of them as $\varphi(\mathcal{U})_{[1]}$ and $\varphi(\mathcal{U})_{[2]}$. By definition of the Gateaux derivative (\ref{Gateaux}) we have that 
\begin{align*}
d\varphi(\mathcal{U})_{[1]} = &f_1 + \frac{\Delta t}{2\Delta x}\big(-\rho(f_2^+ - f_2^-) - f_1(v^+ - v^-) - v(f^+_1 - f_1^-) - f_2(\rho^+ - \rho^-)\big)\\
& + \frac{(\Delta t)^2}{(\Delta x)^2}\left(\frac{1}{2}\rho(v^+ - v^-)(f_2^+ - f_2^-) + \frac{1}{4}f_1(v^+ - v^-)^2\right)\\
& + \frac{(\Delta t)^2}{(\Delta x)^2}\big(\rho v(f^+_2 - 2f_2 + f_2^-) + (f_1v + f_2\rho)(v^+ - 2v + v^-)\big)\\
& + \frac{(\Delta t)^2}{(\Delta x)^2}\left(\frac{1}{2}v(v^+ - v^-)(f^+ - f_1^-) + v(f_2^+ - f_2^-)(\rho^+ - \rho^-)+f_2(v^+ - v^-)(\rho^+ - \rho^-)\right)\\
& + \frac{C(\Delta t)^2}{2\Delta x}\big(\pm f_2\rho(v^+ - v^-) \pm vf_1(v^+ - v^-) \pm \rho v (f^+_2 - f_2^+)\big)\\
& + \frac{C(\Delta t)^2}{4\Delta x}\big(-v^2(f_1^+ - f_1^-) - 2f_2(\rho^+ - \rho^-)\big)\\
& + \frac{(\Delta t)^2}{(\Delta x)^2}\left(\frac{1}{2}\left(\frac{K}{\rho_a} + v^2\right)(f^+_1 - 2f_1 + f_1^-) + 2vf_2(\rho^+ - 2\rho + \rho^-)\right).
\end{align*}
\begin{align*}
d\varphi(\mathcal{U})_{[2]} = &f_2 + \frac{\Delta t}{2\Delta x}(-v(f_2^+ - f_2^-) - f_2(v^+ - v^-) + \frac{K}{\rho_a\rho}(f_1^+ - f_1^-) - \frac{K}{\rho_a\rho^2}f_1(\rho^+ - \rho^-)) \\
& + \frac{(\Delta t)^2}{4(\Delta x)^2}(f_2(v^+ - v^-)^2 - 2v(v^+ - v^-)(f_2^+ - f_2^-))\\
& + \frac{(\Delta t)^2}{2(\Delta x)^2}\left(\left(\frac{K}{\rho_a} + v^2\right)(f_2^+ - 2f_2 + f_2^-) + 2vf_2(v^+ - 2v + v^-)\right)\\
& + \frac{5C(\Delta t)^2}{4\Delta x}|v|\big(2f_2(v^+ - v^-) + v(f_2^+ - f_2^-)\big)\\
& + \frac{K(\Delta t)^2}{4\rho_a(\Delta x)^2}\left(\frac{1}{\rho}(\rho^+ - \rho^-)(f_2^+ - f_2^-) - \frac{f_1}{\rho^2}(v^+ - v^-)(f_1^+ - f_1^-)\right)\\
& + \frac{(\Delta t)^2}{8(\Delta x)^2}\left(\frac{-4Kv}{\rho_a\rho^2}(\rho^+ - \rho^-) + \frac{4Kvf_1}{\rho_a\rho^3}(\rho^+ - \rho^-)^2 - \frac{2Kf_2}{\rho_a\rho^2}(\rho^+ - \rho^-)^2\right)\\
& + \frac{K(\Delta t)^2}{\rho_a(\Delta x)^2}\left(-\frac{vf_1}{\rho^2}(\rho^+ - 2\rho + \rho^-) + \frac{v}{\rho}(f_1^+ - 2f_1 + f_1^-) + \frac{f_2}{\rho}(\rho^+ - 2\rho + \rho^-)\right)\\
& + \frac{CK(\Delta t)^2}{2\rho_a\Delta x}\left(\frac{|v|}{\rho}(f_1^+ - f_1^-) \pm \frac{f_2}{\rho}(\rho^+ - \rho^-)\right)\\
& + 2C\Delta t|v|f_2 + 3C(\Delta t)^2v^2f_2.
\end{align*}
Then we set $v(t,x + \Delta x) = v(t,x) = v(t,x - \Delta x)$ and $\rho(t,x + \Delta x) = \rho(t,x) = \rho(t,x - \Delta x)$, which corresponds to those stated in the Definition \ref{defnfv} and we have
\begin{align*}
d\varphi(\mathcal{U})_{[1]} &= f_1 + \frac{\Delta t}{\Delta x}(-\rho(f_2^+ - f_2^-) - v(f_1^+ - f_1^-))\\ \nonumber
& + \frac{C(\Delta t)^2}{2\Delta x}(\rho|v|(f_2^+ - f_2^-) - 1/2v^2(f_1^+ - f_1^-))\\ \nonumber
& + \frac{(\Delta t)^2}{(\Delta x)^2}\left(1/2(\frac{K}{\rho_a} + v^2)(f_1^+ - 2f_1 + f^-_1)\right),\\
d\varphi(\mathcal{U})_{[2]} &= f_2 + \frac{\Delta t}{2\Delta x}\left(-v(f_2^+ - f_2^-) + \frac{K}{\rho_a\rho}(f_1^+ - f_1^-) \right)\\
	& + \frac{(\Delta t)^2}{\Delta x}\left(\frac{5C}{4}v|v|(f_2^+ - f_2^-) + \frac{CK|v|}{2\rho_a\rho}(f_1^+ - f_1^-)\right)\\
	& + \frac{(\Delta t)^2}{(\Delta x)^2}\left(\frac{1}{2}\left(\frac{K}{\rho_a} + v^2\right)(f_2^+ - 2f_2 + f_2^-) + \frac{Kv}{\rho_a\rho}(f_1^+ - 2f_2 + f_1^-)\right)\\
	& + 2C\Delta t|v|f_2 + 3C(\Delta t)^2v^2f_2.
\end{align*}
Recall the definition of the Jacobian from \eqref{Jac}. Usually, the Jacobian corresponds to the discrete version of the operator operator $A$, but now we also have the terms from $B$ in \eqref{hivp}. Now that we have carefully computed the first variations for the finite difference operator $\varphi$, we recall the following lemma 
\begin{lem}[Strang, \cite{Strang1964}]\label{lemmal2}
	If for some constant $C$ which is independent of $\Delta t, \Delta x$ but may depend on $t$, we have
	\begin{align*}
		\|M(t - \Delta t,\Delta x)M(t - 2\Delta t,\Delta x)...M(t - n\Delta t,\Delta x)\|_{\ell_2} \le C, \ \ n\Delta t \le t \le t_0,
	\end{align*}
then the first variation of the Lax-Wendroff scheme \eqref{fds} is $\ell_2$-stable.
\end{lem}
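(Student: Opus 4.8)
The plan is to read the conclusion as the defining property of $\ell_2$-stability for the linearized scheme, and to show that the hypothesis supplies exactly that property. Recall from the remark following Definition \ref{defnfv} that the first variation $M(t,\Delta x)$ is the one-step linearized evolution operator of $\varphi$: it sends a perturbation supported on $\mathcal{M}(t,\Delta x)$ to its first-order image on $\mathcal{M}(t+\Delta t,\Delta x)$. Consequently, if an initial perturbation $f$ of the discrete data is advanced $n$ steps under the linearized scheme, its image is the composition $M(t-\Delta t,\Delta x)M(t-2\Delta t,\Delta x)\cdots M(t-n\Delta t,\Delta x)f$. I would make this identification the backbone of the argument.

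First I would fix the definition being invoked: the first variation of \eqref{fds} is $\ell_2$-stable if there is a constant $C$, independent of $\Delta t$ and $\Delta x$ (though it may depend on $t_0$), so that the $n$-step linearized image of every initial perturbation $f\in\ell_2$ is bounded in $\ell_2$ by $C\|f\|_{\ell_2}$, uniformly over all $n$ with $n\Delta t\le t\le t_0$. Next I would observe that this $n$-step image is precisely the product $M(t-\Delta t,\Delta x)\cdots M(t-n\Delta t,\Delta x)$ applied to $f$, the very operator whose $\ell_2\to\ell_2$ operator norm the hypothesis bounds by $C$. Writing $T$ for that product, the elementary identity $\|T\|_{\ell_2}=\sup_{g\ne 0}\|Tg\|_{\ell_2}/\|g\|_{\ell_2}$ turns the hypothesis $\|T\|_{\ell_2}\le C$ into $\|Tf\|_{\ell_2}\le C\|f\|_{\ell_2}$ for every $f$, which is exactly the stability estimate in the definition. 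Hence the first variation is $\ell_2$-stable.

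The genuine point requiring care --- rather than a routine calculation --- is the uniformity bookkeeping: I must verify that the hypothesis provides a single constant $C$ that works simultaneously for every step count $n$ and every window of $n$ consecutive time levels contained in $[0,t_0]$, with no concealed dependence on $n$, on the starting level, or on the mesh parameters $(\Delta t,\Delta x)$. The quantifier ``$n\Delta t\le t\le t_0$'' in the hypothesis is what guarantees this, and once it is read correctly the single $C$ propagates to the whole family of compositions. With the first variation shown $\ell_2$-stable in this sense, together with the consistency of order two from Proposition \ref{main2prop}, Strang's theorem \cite{Strang1964} applies and yields the convergence asserted in Theorem \ref{main}.
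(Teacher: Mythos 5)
Your proposal is correct and coincides with the paper's treatment: the paper states this lemma without proof, importing it from Strang \cite{Strang1964}, where uniform $\ell_2$-boundedness of the products $M(t-\Delta t,\Delta x)\cdots M(t-n\Delta t,\Delta x)$ over all $n$ with $n\Delta t\le t\le t_0$ is precisely the \emph{definition} of stability of the first variation, so your argument --- identifying the $n$-step linearized evolution with that product and checking that the single constant $C$ is uniform in $n$, the time window, and $(\Delta t,\Delta x)$ --- is exactly the definitional unwinding the citation presupposes. The only step you assert rather than derive, namely that the linearization of the $n$-fold composed scheme is the composition of the one-step first variations $M$ evaluated along the solution, is the chain rule for the Gateaux derivative and is likewise taken for granted in \cite{Strang1964}; note also that your closing appeal to consistency and Strang's convergence theorem belongs to the proof of Theorem \ref{main}, not to this lemma itself.
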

This Lemma 
agrees
with the modern definition of $\ell_2$ stability c.f. Remark 2.2.19 in \cite{Allairebook2007} but allows for the extra term $B$ in equation \eqref{hivp}. In Strang's proof in \cite{Strang1964}, the term $B$ does not contain any derivatives in $u$ so it does not change the stability of the operator. 
In order to use this Lemma, we introduce some concepts from \cite{Allairebook2007}. We need to define a norm for the numerical solution $U^n=(U_j^n)_{1\leq j\leq N}$, where $U_j^n=U(n\Delta t, j\Delta x)$. We take the classical $\ell_p$ norm which we scale by the space step $\Delta x$ to obtain 
\begin{align}
\|U^n\|_p=\left(\sum\limits_{j=1}^N\Delta x|U_j^n|^p\right)^{\frac{1}{p}}\quad 1\leq p\leq \infty
\end{align}
Thanks to the weighting by $\Delta x$ the norm $\|U^n\|_p$ is identical to the norm $L^p(0,1)$ for piecewise constant functions over subintervals $[x_j,x_{j+1}]$ of $[0,1]$. We therefore refer to this as the $L^p$ norm. We want to show for the nonlinear problem that the finite difference Jacobian can be bounded as an operator on $\ell_2$. Using the norm introduced above, we want to prove the following proposition:
\begin{prop}\label{mainprop}
Let $\varepsilon<1$ be a small number depending on the $L^{\infty}$ norm of the solution, $u$ to \eqref{hivp}. The first variation of the Lax-Wendroff scheme $\varphi$ as stated in \eqref{fds} of the quasi-linear hyperbolic equation \eqref{hivp} is conditionally $\ell_2$-stable with a CFL condition:
	\begin{equation}
		\frac{\Delta t}{\Delta x}\sqrt{\frac{K}{\rho_a}} \le \varepsilon(\|u\|_{L^{\infty}})<1
	\end{equation}
In particular since $\varepsilon$ is sufficiently small then this implies
	\begin{equation}
		\|M\|_{L^2}\le 1 + O_{L^\infty}\left(\frac{\Delta t}{\Delta x}\sqrt{\frac{K}{\rho_a}}\right)^2<2.
	\end{equation}
The subscript $L^\infty$ refers to the dependence of the constant in the $O$ terms on the $L^\infty$ norm of the solution to \eqref{hivp} in the classical sense.
\end{prop}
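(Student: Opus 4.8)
The plan is to prove this by a von~Neumann (discrete Fourier) stability analysis of the frozen-coefficient first variation computed just above, and then to hand the resulting per-step matrix bound to Lemma~\ref{lemmal2}. Because the coefficients have been frozen at $(\rho,v)=(\rho(t,x),v(t,x))$ in the sense of Definition~\ref{defnfv}, the operator $M(t,\Delta x)$ is a \emph{constant-coefficient} difference operator in $f$, built only from the shifts $f^{\pm}=f(\,\cdot\,\pm\Delta x)$. By Plancherel's theorem for the $\Delta x$-scaled $\ell_2$ norm, one has $\|M(t,\Delta x)\|_{\ell_2}=\sup_{\theta\in[-\pi,\pi]}\|\widehat M(\theta)\|_{2}$, where $\widehat M(\theta)$ is the $2\times2$ amplification matrix obtained from the frozen formulas for $d\varphi(\mathcal U)_{[1]}$ and $d\varphi(\mathcal U)_{[2]}$ via the substitutions $f^{+}-f^{-}\mapsto 2i\sin\theta$ and $f^{+}-2f+f^{-}\mapsto -2(1-\cos\theta)$, with $\theta=\xi\Delta x$. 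Thus the whole problem reduces to a uniform-in-$\theta$ estimate of the spectral norm (largest singular value) of $\widehat M(\theta)$.

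I would then isolate in $\widehat M(\theta)$ the principal hyperbolic part, which is the matrix Lax--Wendroff symbol
\[
G(\theta)=I-irA\sin\theta-r^{2}A^{2}(1-\cos\theta),\qquad r=\frac{\Delta t}{\Delta x},
\]
with every remaining contribution (those coming from the friction matrix $B$) carrying at least one explicit power of $\Delta t$. For the principal part I would exploit hyperbolicity: $A$ has the real, distinct eigenvalues $\lambda_{\pm}=v\pm\sqrt{K/\rho_a}$, so $G(\theta)=S\,\mathrm{diag}\big(g(r\lambda_{+},\theta),g(r\lambda_{-},\theta)\big)S^{-1}$ with the scalar Lax--Wendroff factor $g(\nu,\theta)=1-i\nu\sin\theta-\nu^{2}(1-\cos\theta)$, for which a direct computation gives $|g(\nu,\theta)|^{2}=1-\nu^{2}(1-\nu^{2})(1-\cos\theta)^{2}\le1$ whenever $|\nu|\le1$. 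Absorbing $|v|$ into the constant $\varepsilon(\|u\|_{L^\infty})$, the requirement $|r\lambda_{\pm}|\le1$ is implied by $\frac{\Delta t}{\Delta x}\sqrt{K/\rho_a}\le\varepsilon<1$, which is exactly the stated CFL condition.

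The delicate point, and the step I expect to be the main obstacle, is that $A$ is \emph{not} symmetric, so diagonalizing by $S$ a priori only yields $\|G(\theta)\|_2\le\kappa(S)\max_{\pm}|g(r\lambda_{\pm},\theta)|$, and the condition number $\kappa(S)$ need not be $1$; one cannot simply conclude $\|G(\theta)\|_2\le1$. I would remove this loss by symmetrizing: since the system is hyperbolic there is a symmetric positive-definite $H=H(u)$ with $HA$ symmetric, and for this $A$ one may take $H=\mathrm{diag}\big(K/(\rho_a\rho),\rho\big)$ (then $HA$ has equal off-diagonal entries $K/\rho_a$). In the $H$-inner product $rA$ is self-adjoint, hence $G(\theta)$ is normal, giving $\|G(\theta)\|_{H}\le\max_{\pm}|g(r\lambda_{\pm},\theta)|\le1$ directly; the passage back to the Euclidean norm costs only the fixed equivalence constants $\lambda_{\min}(H),\lambda_{\max}(H)$, which depend on $\|u\|_{L^\infty}$ and the bounded, non-vanishing density. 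This symmetrization is precisely what forces the final bound to depend on $\|u\|_{L^\infty}$ and what must be made to also absorb the non-self-adjoint friction terms.

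Finally I would assemble the estimate by writing $\widehat M(\theta)=G(\theta)+\Delta t\,R(\theta)$, where the remainder collects the friction contributions (the terms $2C\Delta t|v|f_2$, $\tfrac{5C}{4}(\Delta t)^2 v|v|$, and so on) and satisfies $\|R(\theta)\|\le C(\|u\|_{L^\infty})$ uniformly in $\theta$. The triangle inequality together with the principal-part bound then gives $\|\widehat M(\theta)\|\le1+O_{L^\infty}\big((\tfrac{\Delta t}{\Delta x}\sqrt{K/\rho_a})^{2}\big)$ uniformly in $\theta$, which is the asserted bound on $\|M\|_{L^2}$, strictly below $2$ once $\varepsilon<1$ is taken small. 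To conclude conditional $\ell_2$-stability through Lemma~\ref{lemmal2}, I would note that the genuinely amplifying part is the $O(\Delta t)$ friction contribution (the principal part being an $H$-contraction under CFL), and telescope in the energy norm: the product over $n\le t_0/\Delta t$ factors is controlled by $\lambda_{\max}(H)\lambda_{\min}(H)^{-1}(1+C\Delta t)^{n}\le C_H\,e^{Ct_0}$, converting to the Euclidean norm only at the endpoints. This is exactly the hypothesis of Lemma~\ref{lemmal2}, which then delivers the claimed $\ell_2$-stability.
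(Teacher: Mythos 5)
Your route is genuinely different from the paper's, and most of its ingredients (the exact Lax--Wendroff symbol $G(\theta)=I-irA\sin\theta-r^{2}A^{2}(1-\cos\theta)$, the identity $|g(\nu,\theta)|^{2}=1-\nu^{2}(1-\nu^{2})(1-\cos\theta)^{2}$, the symmetrizer $H=\operatorname{diag}\bigl(K/(\rho_a\rho),\rho\bigr)$ making $HA$ symmetric) are correct computations. But there is a genuine gap at the very first step: the claim that $M(t,\Delta x)$ is a \emph{constant-coefficient} difference operator is false. By Definition~\ref{defnfv} and \eqref{Jac}, the coefficients $c_j$ are evaluated at $u(t,x)$ -- the three arguments of each $c_j$ are set equal to one another, but they still depend on the base point $x$ through the non-constant solution $u(t,\cdot)$. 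For a variable-coefficient operator $(Mf)(x)=\sum_j c_j(x)f(x+j\Delta x)$, Plancherel's theorem does not diagonalize $M$, and the identity $\|M\|_{\ell_2}=\sup_{\theta}\|\widehat M(\theta)\|_{2}$ fails; frozen-coefficient (von Neumann) stability is in general only a necessary condition. To close this gap you must add a frozen-to-variable-coefficients theorem on top of your pointwise symbol bound: either the Lax--Nirenberg theorem (smooth coefficients plus $\|\widehat M(x,\theta)\|\le 1$ pointwise implies $\ell_2$-stability up to $1+O(\Delta t)$ per step) or Kreiss' theorem on dissipative schemes, which is available here because $u\in C^{\infty}$, the system is strictly hyperbolic and subsonic, and Lax--Wendroff is dissipative of order $4$ under strict CFL. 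The same oversight affects your final telescoping: $H=H(u(t,x))$ varies in $x$ and $t$, so you cannot pass to ``the'' $H$-norm once and convert back ``only at the endpoints''; you must track the $O(\Delta t)$ variation of $H$ along the evolution, which (for smooth $u$) again contributes only factors $(1+C\Delta t)^{n}\le e^{Ct_0}$, but this has to be stated and proved rather than assumed.

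For comparison, the paper's proof avoids Fourier analysis entirely: it takes absolute values termwise in the explicitly computed Gateaux derivative, bounds every coefficient by its $L^{\infty}$ norm, and sums, which is insensitive to the $x$-dependence of the coefficients (so the variable-coefficient issue never arises) but destroys the cancellation your symbol calculation exploits -- the paper's per-step bound retains genuinely first-order-in-$r$ terms, and smallness is recovered only through the hypothesis $\Courant\le\varepsilon(\|u\|_{L^{\infty}})$ before invoking Lemma~\ref{lemmal2}. Your approach, once repaired by citing Lax--Nirenberg or Kreiss, is actually sharper and more structural: it isolates the fact that the principal hyperbolic part is an $H$-norm contraction under CFL and confines all growth to the $O(\Delta t)$ friction terms, which is exactly what yields a uniform bound on products of $n\sim t_0/\Delta t$ factors as Lemma~\ref{lemmal2} requires. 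So the proposal is not wrong in spirit, but as written its central reduction step does not hold, and the missing named theorem is precisely the price of the von Neumann route.
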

\begin{proof}	
Let $\Courant$ be the Courant number such that $\Courant:= \frac{\Delta t}{\Delta x}\sqrt{\frac{K}{\rho_a}}$, and $\sqrt{\frac{K}{\rho_a}}$ is the sound speed. Let $C:= \frac{c_f}{2D}$ and let $M(f)$ be the first variation of the Lax-Wendroff scheme with respect to $U(t,x + j\Delta x)$, j = 1,2,3..., then we have
\begin{align*}
&|M(f)_{[1]}| \le |f_1| + \frac{\Delta t}{\Delta x}\left[\|\rho\|_{L^\infty}(|f^+_2| + |f_2^-|) + \|v\|_{L^\infty}(|f_1^+| + |f_1^-|)\right] \\&+ \frac{C(\Delta t)^2}{\Delta x}\left[\|\rho\|_{L^\infty}\|v\|_{L^\infty}(|f_2^+|+|f_2^-|) + \frac{1}{2}\|v\|_{L^\infty}(|f_1^+| + |f_1^-|)\right]\\
& + \frac{(\Delta t)^2}{(\Delta x)^2}\left[1/2(\frac{K}{\rho_a} + \|v\|^2_{L^\infty})(|f_1^+| + 2|f_1| + |f_1^-|)\right].
\end{align*}
Similarly we have
\begin{align*}
&|M(f)_{[2]}| \le |f_2| + \frac{\Delta t}{\Delta x}\left[\|v\|_{L^\infty}(|f^+_2| + |f_2^-|) + \frac{K}{\rho_a} \|1/\rho\|_{L^\infty}(|f_1^+| + |f_1^-|)\right]\\
& + \frac{(\Delta t)^2}{\Delta x}[\frac{5C}{4}\|v\|^2_{L^\infty}(|f_2^+|+|f_2^-|) + \frac{K}{\rho_a}\|v/\rho\|_{L^\infty}(|f_1^+| + |f_1^-|)]\\
& + \frac{(\Delta t)^2}{(\Delta x)^2}[\frac{1}{2}(\frac{K}{\rho_a} + \|v\|^2_{L^\infty})(|f_2^+| + 2|f_2| + |f_2^-|)]\\
&  + \frac{(\Delta t)^2}{(\Delta x)^2}[\frac{K}{\rho_a}\|v/\rho\|_{L^\infty}(|f_1^+| + 2|f_1| + |f_1^-|)].
\end{align*}
Let $r:=\frac{\Delta t}{\Delta x}$ and we denote the subscript $L^{\infty}$ in the $O_{L^\infty}$ terms to depend only on the $L$-infinity norm of the solution to \eqref{hivp}. We take $L^2$ norm of both sides of $|M(f)_{[1]}|$ and $|M(f)_{[2]}|$ to get
\begin{align*}
\|M(f)_{[1]}\|_{L^2} &\le \|f\|_{L^2}\bigg[1 + \frac{2\Delta t}{\Delta x}\big(\|\rho\|_{L^\infty} + \|v\|_{L^\infty}\big) \\ 
& + \frac{C(\Delta t)^2}{\Delta x}\left(\|\rho\|_{L^\infty} + \|v\|_{L^\infty} + \frac{1}{2}\|v\|^2_{L^\infty}\right) + \frac{2(\Delta t)^2}{(\Delta x)^2}\left(\frac{K}{\rho_a} + \|v\|_{L^\infty}\right)\bigg]\\
& = 1 + O_{L^\infty}(r + r^2) + 2\Courant^2 + O_{L^\infty}(\Delta t),
\end{align*}
\begin{align*}
\|M(f)_{[2]}\|_{L^2} &\le \|f\|_{L^2}[1 + \frac{\Delta t}{\Delta x}(\|v\|_{L^\infty}  + \frac{K}{\rho_a}\|1/\rho\|_{L^\infty})\\
&+ \frac{C(\Delta t)^2}{\Delta x}(\frac{5}{2}\|v\|^2_{L^\infty} + \frac{K}{\rho_a}\|\frac{v}{\rho}\|_{L^\infty})+ \frac{(\Delta t)^2}{(\Delta x)^2}(\frac{2K}{\rho_a} + 2\|v\|^2_{L^\infty} + \frac{4K}{\rho_a}\|\frac{v}{\rho}\|_{L^\infty})]\\
& = 1 + O_{L^\infty}(r + r^2) + \Courant^2(2 + 4O_{L^\infty}(1)) + O_{L^\infty}(\Delta t + \Delta x),
\end{align*}
 We remark here that we know that for subsonic (hyperbolic) systems $|v(t,x)|< \sqrt{\frac{K}{\rho_a}} < \infty$, and $\rho_{min} < \rho(t,x) < \rho_{max}$, for some $\rho_{min},\rho_{max} \in \mathbb{R}^+$. It is possible to find these values before the valve closure since we assumed the system is hyperbolic and the solution is in $C^{\infty}$. We then can write the $L^2$ norm of the first variation as follows
\begin{align*}
	& \|M(f)_{[1]}\|_{L^2} \le \|f\|_{L^2}\left[1 + K_{11}r + K_{12}r^2 + 2C_o^2 + O_{L^\infty}(\Delta t)\right],\\
	& \|M(f)_{[2]}\|_{L^2} \le \|f\|_{L^2}\left[1 + K_{21}r + K_{22}r^2 + K_{23}\Courant^2 + O_{L^\infty}(\Delta t + \Delta x)\right],
\end{align*}
where the coefficients $K_{11}$, $K_{12}$, $K_{21}$, $K_{22}$, $K_{23}$ are the upper bound numbers in terms of the $L$-infinity norm respectively. Since $\frac{\Delta t}{\Delta x} < \Courant^2$, we then have
\begin{align*}
	\|M\|_{L^2}\le 1 + O_{L^\infty}(\Courant^2),
\end{align*}
By homogenity of the norm in $\Delta x$, we can conclude the same bound but with the $\ell^2$ norm. 
In order for this the order terms to be small, we must have that $\Courant\leq \varepsilon(\|u\|_{L^{\infty}})$. Since this operator norm was computed for fixed $t$ we can apply the operator $M$ iteratively to give the desired bound $C$ which is conditional since it depends on the time $t\in [0,T]$. After application of Lemma \ref{lemmal2} we have proved the proposition since Lemma \ref{lemmal2} gives the $\ell_2$ stability of the operator.
\end{proof} 
Recall the following theorem:
\begin{theorem*}\nonumber [subset of Strang, \cite{Strang1964}]\label{convergethm}
Suppose that $\varphi$ is a consistent operator with order of accuracy $p$, and that its first variation is $\ell_2$-stable. Then if $u,A,\alpha$ and $\varphi$ are smooth inside cone dictated by the CFL condition
\begin{equation*}
	U(t_0,x_0,\Delta t) = u(t_0,x_0) + O((\Delta t)^p).
\end{equation*}
\end{theorem*}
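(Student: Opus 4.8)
The plan is to read this statement as the nonlinear analogue of the Lax equivalence theorem and to establish the convergence estimate (thereby closing Theorem~\ref{main}) from the two ingredients already assembled: consistency with order of accuracy $p=2$ (Proposition~\ref{main2prop}) and $\ell_2$-stability of the first variation under the CFL condition (Proposition~\ref{mainprop}, via Lemma~\ref{lemmal2}). The substantive work is then the error-accumulation argument that converts a local $O((\Delta t)^{p+1})$ truncation error into a global $O((\Delta t)^p)$ error. First I would fix the truncated principal-error expansion
\begin{equation*}
\widetilde U(t,x):=u(t,x)+\sum_{k=1}^{K}(\Delta t)^k V_k(t,x),\qquad K\ge p,
\end{equation*}
where the $V_k$ solve the linear inhomogeneous systems derived from \eqref{principalerr} and \eqref{nonlinexpan}. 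Because the scheme has accuracy order $p=2$, the computation in Proposition~\ref{main2prop} gives $V_1\equiv 0$, so that $\widetilde U-u=(\Delta t)^2 V_2+\cdots=O((\Delta t)^p)$ uniformly on the domain, and by construction $\widetilde U$ satisfies the scheme \eqref{fds} only up to a residual $\tau^n:=\varphi(\widetilde U^n)-\widetilde U^{n+1}=O((\Delta t)^{K+1})$.

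Next I would write the error between the genuine numerical solution and the expansion, $E^n:=U^n-\widetilde U^n$, and derive its recursion. Using the Gateaux derivative \eqref{Gateaux} and the first-variation operator $M$ from \eqref{Jac}, a mean-value expansion of $\varphi$ about $\widetilde U^n$ yields
\begin{equation*}
\varphi(U^n)-\varphi(\widetilde U^n)=M(\widetilde U^n)\,E^n+R^n,\qquad \|R^n\|_{\ell_2}=O\big(\|E^n\|_{\ell_2}^2\big),
\end{equation*}
so that, since $U^{n+1}=\varphi(U^n)$,
\begin{equation*}
E^{n+1}=M(\widetilde U^n)\,E^n+R^n+\tau^n .
\end{equation*}
Iterating this recursion and inserting the stability estimate of Lemma~\ref{lemmal2}, namely that the products $M(t-\Delta t,\Delta x)\cdots M(t-n\Delta t,\Delta x)$ are bounded in $\ell_2$ by a constant independent of $\Delta t,\Delta x$, a discrete Duhamel formula expresses $E^n$ as a stability-weighted sum of the $R^j$ and $\tau^j$. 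With $O((\Delta t)^{-1})$ time steps inside the cone $\mathcal C_0$, the residual part contributes $O((\Delta t)^{-1})\cdot O((\Delta t)^{K+1})=O((\Delta t)^{K})$; taking $K\ge p$ gives $E^n=O((\Delta t)^p)$, whence
\begin{equation*}
U(t_0,x_0,\Delta t)-u(t_0,x_0)=\big(\widetilde U-u\big)(t_0,x_0)+E^{n}=O((\Delta t)^p),
\end{equation*}
which is the asserted estimate. The CFL condition of Proposition~\ref{mainprop} is precisely what keeps the mesh points $\mathcal M(t_0-i\Delta t,\Delta x)$ used in this evaluation inside $\mathcal C_0$, so that the expansion and all bounds hold uniformly there.

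I expect the main obstacle, and the reason one genuinely needs Strang's abstract theorem rather than a one-line citation of linear Lax equivalence, to be the closing of the recursion for $E^n$ in the presence of the quadratic remainder $R^n$ and of the variable coefficients of $M$. On the one hand, $R^n=O(\|E^n\|_{\ell_2}^2)$ can be absorbed into the linear part only once one already knows $\|E^n\|_{\ell_2}$ is small; this calls for a bootstrap (continuous-induction) argument in which smallness of $E^n$ is propagated step by step and used to re-establish itself, and it is here that the uniform bound $|U^n_j|\le C_h$ keeps the iterates in the region where $M$ and the remainder estimates remain valid. On the other hand, Lemma~\ref{lemmal2} and Proposition~\ref{mainprop} bound the first variation frozen along the exact solution $u$, whereas the error equation features $M(\widetilde U^n)$; bridging the two requires the smoothness of $u,A,\alpha$ and $\varphi$ inside the cone, so that the variable-coefficient stability constant depends continuously on the frozen state and survives the $O((\Delta t)^p)$ perturbation from $u$ to $\widetilde U^n$. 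This perturbation-of-stability step, together with the bootstrap control of $R^n$, is exactly the content packaged in the abstract result of \cite{Strang1964}; having verified its hypotheses in Propositions~\ref{main2prop} and \ref{mainprop}, I would invoke it to conclude rather than re-prove the variable-coefficient stability from scratch.
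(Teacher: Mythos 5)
The paper never proves this statement: it is recalled verbatim as a quoted subset of Strang's theorem \cite{Strang1964} and used as a black box in the proof of Theorem \ref{main}, with Lemma \ref{lemmal2} and Propositions \ref{main2prop} and \ref{mainprop} serving only to verify its hypotheses. So your attempt can only be measured against Strang's original argument, and as an outline it is faithful to it: the truncated principal-error expansion $\widetilde U = u + \sum_{k}(\Delta t)^k V_k$ with $V_1,\dots,V_{p-1}$ vanishing, the residual $\tau^n = O((\Delta t)^{K+1})$, the error recursion $E^{n+1} = M(\widetilde U^n)E^n + R^n + \tau^n$ with quadratic remainder, the stability-weighted Duhamel sum over $O((\Delta t)^{-1})$ steps, and the final splitting $U - u = (U - \widetilde U) + (\widetilde U - u)$ are exactly the mechanism of Strang's proof, and your identification of the two genuine difficulties (absorbing $R^n = O(\|E^n\|_{\ell_2}^2)$, and transferring stability of $M$ frozen along $u$ to $M$ frozen along $\widetilde U^n$) is accurate.

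The flaw is that, read as a proof of \emph{this} statement, your closing move is circular: you dispose of precisely those two difficulties by invoking ``the abstract result of \cite{Strang1964}'', which is the very theorem under proof. You must pick one of two consistent positions. Either (i) adopt the paper's stance and treat the whole theorem as a citation, in which case your sketch is motivation rather than proof and nothing needs to be argued; or (ii) make the sketch self-contained by actually proving the two steps. For (ii) the missing pieces are: an induction in $n$ propagating the bound $\|E^n\|_{\ell_2} \le C(\Delta t)^K$, under which the summed quadratic remainders contribute $O\big((\Delta t)^{-1}\cdot(\Delta t)^{2K}\big) = O((\Delta t)^{2K-1})$, closing the induction for $K \ge p \ge 1$ and $\Delta t$ small; and a perturbation estimate $\|M(\widetilde U^n) - M(u(t_n,\cdot))\|_{\ell_2} = O((\Delta t)^p)$, coming from smoothness of the Jacobians $c_j$, which keeps products of the perturbed operators bounded since the perturbations accumulated over $O((\Delta t)^{-1})$ steps total $O((\Delta t)^{p-1})$. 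With those two lemmas supplied, your recursion argument is complete and the appeal to Strang becomes unnecessary; without them, the attempt reduces to the paper's own treatment, namely a citation.
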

\begin{proof}[Proof of Theorem \ref{main}]
Using the above theorem from Strang \cite{Strang1964}, Lemma \ref{lemmal2}, Propositions \ref{main2prop} and \ref{mainprop} allow us to conclude the hypotheses hold with $p=2$ and then we conclude with our main result before the valve closure.
\end{proof} 
The reason we chose this method is that consistency and stability implies convergence for the semi-linear problem is not known for other methods in a straightforward way. 

\section{Discussion of the regularity of the solution $w$ of \eqref{pdep2}}\label{regularitydiscussion}

There is no generic existence and uniqueness result for the solution $w$ which solves \eqref{pdep1}. Therefore the statement of the main theorem needs some more explaining. We let $w_1=(\rho_1,q_1)$ solve \eqref{pdep1}. Let $H_x$ be the heavy side function which is $1$ on $(-\infty,L)$ and zero elsewhere. We let $w_2$ be the solution to the problem 
\begin{align}\label{eqhi}
&(\partial _t\rho + \partial_x q)= 0,\\ \nonumber 
&\partial_tq + \partial_x\left(\frac{q^2}{\rho} + p\right)=-\frac{c_fq|q|}{2D\rho},\\ \nonumber
&\lim\limits_{t\rightarrow t_1^-}(\partial_t\rho_1(t,x)H_x,q_1(t,x)H_x)=(\partial_t\rho_2(t_1,x),q_2(t_1,x))\\ \nonumber
&\rho(t,0) = \rho_1(t) \quad t\in[0,T] \quad q(t,L)=0 \quad t\in [t_1,T]. 
\end{align}
We let $w$ be piecewise defined
\begin{equation*}
	w = \begin{cases}
	w_1\ \ t <t_1\\
	w_2 \ \ t\geq t_1.
	\end{cases}
\end{equation*}
This proposed solution should solve \eqref{pdep2} in the required sense. In this case the heaviside function $H(\Omega)$ is redundant. Notice that we have not made any claims on the regularity of the solution through the point $t_1$. It is not known if such a solution $w$ exists because evolution of the PDE in \eqref{pdep2} depends on the solution itself. The hyperbolic system of type \eqref{pdep1} has been show to have regularity $H^{s}((-\infty,L))$ with $s>1/2$ whenever the initial data has the same regularity, even when a Dirichlet boundary condition is imposed at $x=L$, \cite{BealsMetivier1986}.  For hyperbolic systems with solution independent coefficients, low regularity data including heaviside functions has been examined in \cite{metivierlow}. There is a gap in the literature for this type of system, although ad-hoc solutions with discontinuous data can be constructed. 

Therefore if the solution does not exist, we consider a smoothed out version of $H_x$, say $H_{\epsilon}$, and use the method of reflections, to instead construct an approximate solution to instantaneous valve closure as our desired modelling target. We know that the system \eqref{pdep2} has a form in Riemann coordinates which is equivalent to a coupled system of quasilinear transport equations \cite{BastinCoron2016book}, Example 1.6.1. To see then why this idea is a realistic model for $w_2$, we consider the following examples. First we consider
\begin{example}
\begin{equation}\label{exmoc1}
\begin{cases}
    \partial_tu + c(u)\partial_xu = 0 \quad  c(u) > 0\\
    u_0(x) = f(x)H(x) \\
    u(t,0) = 0
\end{cases} (t,x)\in [0,T]\times(-\infty,+\infty),
\end{equation}
where $f \in C^1(\mathbb{R})$ and $H$ is Heaviside function.
\end{example}
Then we can solve this transport equation by the method of characteristics
\begin{align*}
    \frac{dx}{dt} = c(u), \quad
    \frac{du}{dt} = 0
\end{align*}
which gives the solution by an implicit form
\begin{equation}
    u(t,x) = f(x-c(u)t)H(x-c(u)t),
\end{equation}
and by the reflection principle, the wave is symmetric around $x = 0$ so we construct it as 
\begin{equation}\label{sollinearex}
    u(t,x) = f(x-c(u)t)H(x-c(u)t) - f(-x-c(u)t)H(-x-c(u)t).
\end{equation}
Generally the solution \eqref{sollinearex} is not guaranteed to exist with the discontinuous initial data, except for some cases where the wave speed $c(u)$ is a polynomial in $u$, e.g. $c(u) = u$. However, if we replace the Heaviside function $H$ with a smoothed one, (say $H_\epsilon \in C^2$), then solution of the smoothed problem exists at least for a short period of time, which can be presented as
\begin{equation}
    u(t,x) = f(x-c(u)t)H_\epsilon(-x-c(u)t) - f(-x-c(u)t)H_\epsilon(-x-c(u)t).
\end{equation}
Similarly, if we cast the transport equation in a nonlinear form
\begin{example}
\begin{equation}\label{exmoc2}
\begin{cases}
    \partial_tu + c(u)\partial_xu = -b|u|^2, \quad b>0 \quad c(u) > 0\\
    u_0(x) = f(x)H_\epsilon(x) \\
    u(t,0) = 0
\end{cases}(t,x)\in [0,T]\times(-\infty,+\infty).
\end{equation}
\end{example}
Then the approximated solution by reflection is given by
\begin{align}
     u(t,x) = \frac{f(x-c(u)t)H_\epsilon(x-c(u)t)}{btf(x-c(u)t)H_\epsilon(x-c(u)t) + 1} + \frac{f(-x-c(u)t)H_\epsilon(-x-c(u)t)}{btf(x-c(u)t)H_\epsilon(-x-c(u)t) - 1}.
\end{align}
This approximate solution by the method of characteristics is around 0 at $x= 0$ with small $b$, and is depending continuously on the initial data, and will be valid before a critical time $t^*$ when the characteristic curves cross. Approaching this time $t^*$, the solution graph is becoming vertical somewhere in $x$ as
\begin{equation}
    \frac{\partial u}{\partial x}(t,x) \to \infty \quad \text{when}\quad  t \to t^* .
\end{equation}
This critical time $t^*$ can be determined by the implicit formula as
\begin{align}
    \frac{\partial u}{\partial x} = \frac{\partial}{\partial x}\left(\frac{f(\xi)H_\epsilon(\xi)}{btf(\xi)H_\epsilon(\xi)+1}\right),\quad \text{where}\  \xi = x - c(u)t.
\end{align}
Re-arranging the equation above we can have that
\begin{equation}
    \frac{\partial u}{\partial x} = \frac{[f(\xi)H_\epsilon(\xi)]'}{(bf(\xi)H_\epsilon(\xi)t + 1)^2 + [f(\xi)H_\epsilon(\xi)]'c'(u)t}.
\end{equation}
If there exists an earliest time $t^*$ that making the denominator in the right hand side of the above equation be 0, then the characteristic curves cross. When $c(u)>0$ is linear in $u$ $t^*$ can be quite large. Since $H_{\epsilon}$ can be constructed to approximate $H_x$ in $L^2$ norm so the approximate solution converges to the weak one, if it it exists when $b=0$. If $b\neq 0$, then the boundary conditions are less well preserved as time evolves. 

Similarly, a lengthy computation in Riemann coordinates shows the approximate solution $\tilde{w}_2$ consisting of two $C^2$ reflected initial conditions to the the system \eqref{pdep2}, after the valve closure is in $C^2$ for times depending on the norm of the initial data and is accurate for the nonlinear problem when either $c_f$ or $t$ is small. In this case $H(\Omega)$ restricts $\tilde{w}_2$ to the left of the valve. The numerics simulate the reflected approximate solution $\tilde{w}_2$ after the valve closure. There are no known results on the numerical convergence of the method of characteristics for the system with $c_f\neq 0$ which is why we chose the Lax-Wendroff scheme instead. 

\section{Switched boundary conditions}\label{bcS}
In the previous sections we discussed the consistency and convergence of the nonlinear Lax-Wendroff scheme through the finite difference operator (\ref{fds}). However, the boundary conditions are not specified, and therefore we have to limit the pipe in a certain range so that we can apply the boundary conditions to the scheme and see how it interact with the fluid flow inside the pipe. For simplicity, we will focus on the interacting part of length $2L$, so that we apply the numerical scheme on $\Omega$ as for problem \eqref{pdep2} described in the introduction. To show the scheme is convergent with dynamical boundary conditions we will do it in 2 cases, which are before and after the valve closure, each which introduces a different set of boundary conditions. 

Similar to the numerical scheme setup section, we need to discretize the pipe. We set $\Delta t$ and $\Delta x$ as our smallest units for computation of time and space respectively. We let $N_{\Delta x} = \frac{L}{\Delta x}$ be an integer for convenience, and then on a $2L$-long pipe we have $2N_{\Delta x} + 1$ points.\\

\subsection{Case 1: before the valve closure}\label{BCbefore}
Before the valve closure, the boundary conditions are given by the pressure at the 2 ends of the pipe, which is related to the density of the water by the pressure law and for the convenience of computations we just assume $\rho_1(t)$ and $\rho_2(t)$ are 2 constants. The initial conditions are also known beforehand and given by 
\begin{align*}
\rho(0,0), \rho(0,\Delta x), \rho(0,2\Delta x),..., \rho(0, N_{x}\Delta x)\\
v(0,0), v(0,\Delta x), v(0,2\Delta x),..., v(0, N_{x}\Delta x).
\end{align*}
Recall that we are using central scheme for the difference operator, every time iteration we will lose 2 points at the boundary side, and notice that the boundary conditions of the density $\rho$ are constant, therefore we need to find them of the velocity. \\

Recall again the PDE
\begin{align*}
\partial_t\left(\begin{matrix}
\rho\\v
\end{matrix}\right) + \left(\begin{matrix}
v&\rho\\\frac{K}{\rho_a\rho}&v
\end{matrix}\right)\partial_x\left(\begin{matrix}
\rho\\v
\end{matrix}\right) = \left(\begin{matrix}
0\\-\frac{c_f}{2D}v|v|
\end{matrix}\right).
\end{align*}
Notice that in the mass balance of the PDEs above we have at the boundary side of $x = 0$ 
\begin{align*}
0 = \partial_t\rho(t,x)|_{x = 0} = -\rho(t,0)\partial_xv(t,0) - v(t,0)\partial_x\rho(t,0),
\end{align*}
which motivates us to impose for $P$ and $V$ (discretized from $\rho$ and $v$ respectively) the condition 
\begin{align}\label{bdy1}
P(t,0)\frac{V(t,\Delta x) - V(t,0)}{\Delta x} + V(t,0)\frac{P(t,\Delta x) - P(t,0)}{\Delta x} = 0
\end{align}
Replacing $t$ with $t + \Delta t$ we have the new iteration for $V$ at boundary side $x = 0$:  
\begin{align*}
	V(t + \Delta t,0) = \frac{P(t+ \Delta t,0)V(t+ \Delta t,\Delta x)}{2P(t+ \Delta t,0) - P(t+ \Delta t,\Delta x)}.
\end{align*}
Same with the other side of the pipe, we have for $x = 2L$ that
\begin{align*}
P(t,2L)\frac{V(t,2L) - V(t,2L - \Delta x)}{\Delta x} + V(t,2L)\frac{P(t,2L) - P(t,2L - \Delta x)}{\Delta x} = 0,
\end{align*}
and the result is that
\begin{align*}
	V(t + \Delta t,2L) = \frac{P(t+ \Delta t,2L)V(t+ \Delta t,2L-\Delta x)}{2P(t+ \Delta t,2L) - P(t+ \Delta t,2L - \Delta x)}.
\end{align*}
\subsection{Case 2: after the valve closure}
When the valve is suddenly closed at $t = t_1$ at the position $x = L$, the boundary conditions will switched accordingly as we stated in (\ref{pdep2}). The initial conditions for $\rho$ and $v$ can be computed from the case 1 when setting the time $t = t_1$.\\

For the boundary conditions, the left side which connects to a reservoir does not change at all, correspondingly we have that 
\begin{align}
V(t + \Delta t,0) = \frac{P(t+ \Delta t,0)V(t+ \Delta t,\Delta x)}{2P(t+ \Delta t,0) - P(t+ \Delta t,\Delta x)}.
\end{align}

At the right side $x = L$, the valve is now closed, so we have $v(t,L) = 0$ for $t > t_1$, and we need to calculate the density (pressure). We are using the momentum balance at $x = L$ so that
\begin{align}\label{bdy2}
v(t,L)\partial_xv(t,L) + \frac{1}{\rho(t,L)}\frac{K}{\rho_a}\partial_x\rho + \frac{c_f}{2D}v(t,L)|v(t,L)| = 0. 
\end{align}
Since $v(t,L) = 0$, and $\rho(t,L)$ can not be 0, we then have $\partial_x\rho(t,L) = 0$ which leads to
\begin{align*}
P(t,L) = P(t, L - \Delta x).
\end{align*}
Notice that when computing the boundary conditions by numerical method, we are using forward and backward scheme, which is different from the central scheme we used for the operator $\varphi$. Therefore, we have to make sure that there will be no lower order (less than 2) error propagating in the iterations. So we have the following corollary: 
\begin{cor}\label{thmbdy}
	The finite difference operator $\varphi$ of Lax-Wendroff scheme (\ref{fds}) for the quasi-linear hyperbolic equation (\ref{hivp}) with dynamical boundary conditions is consistent of order 2.
\end{cor}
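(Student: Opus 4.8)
The plan is to leverage the interior result already in hand and reduce the corollary to a purely boundary-local estimate. By Proposition \ref{main2prop} the operator $\varphi$ is consistent of order two at every interior grid point, so the first principal error term $V_1$ vanishes there. Since the accuracy definition recalled in Section \ref{schemeS} only requires that the first non-vanishing principal error term be $V_2$, it suffices to show that the one-sided updates used at the boundary points---namely (\ref{bdy1}) at $x=0$, its mirror at $x=2L$, and $P(t,L)=P(t,L-\Delta x)$ at the closed valve---also carry a local truncation error of size $O((\Delta t)^2)$, so that no $O(\Delta t)$ term is injected at the boundary. Because the ratio $r=\Delta t/\Delta x$ is fixed, this is equivalent to showing the boundary formulas are accurate to $O((\Delta x)^2)$, even though the forward/backward stencils used to derive them are only first-order accurate in isolation.

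First I would treat the reservoir end $x=0$. Feeding the exact smooth solution into the update and Taylor expanding in $\Delta x$, the denominator $2P(t+\Delta t,0)-P(t+\Delta t,\Delta x)$ equals $\rho_1-\Delta x\,\partial_x\rho+O((\Delta x)^2)$ (using $\rho(t,0)=\rho_1$ constant) while the numerator equals $\rho_1[v(t+\Delta t,0)+\Delta x\,\partial_x v]+O((\Delta x)^2)$, so the quotient is
\begin{equation*}
v(t+\Delta t,0)+\frac{\Delta x}{\rho_1}\left(\rho_1\,\partial_x v+v(t+\Delta t,0)\,\partial_x\rho\right)+O((\Delta x)^2).
\end{equation*}
The nominally first-order term is proportional to $\rho_1\,\partial_x v+v\,\partial_x\rho$, which is exactly $-\partial_t\rho(t+\Delta t,0)=0$ because the reservoir keeps $\rho$ constant in time at $x=0$; hence the apparent $O(\Delta x)$ error cancels and the update is second order. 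The mirror computation at $x=2L$ is identical with backward differences. The valve end $x=L$ is simpler still: there $v(t,L)=0$ forces $\partial_x\rho(t,L)=0$ through the momentum balance (\ref{bdy2}), so $\rho(t,L)-\rho(t,L-\Delta x)=\Delta x\,\partial_x\rho(t,L)+O((\Delta x)^2)=O((\Delta x)^2)$ and the prescription $P(t,L)=P(t,L-\Delta x)$ is again consistent to second order.

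With all three boundary updates shown to carry only $O((\Delta x)^2)=O((\Delta t)^2)$ truncation error, the principal error term $V_1$ vanishes at the boundary just as it does in the interior, so the first non-vanishing term is $V_2$ and the full scheme with dynamical boundary conditions is consistent of order two. The step I expect to require the most care is exactly this cancellation of the leading $O(\Delta x)$ contribution: it is not an accident of the stencil but a consequence of the fact that the exact solution satisfies the governing balance law at the boundary---mass balance at the reservoir and momentum balance at the closed valve---which is precisely what the discretizations (\ref{bdy1}) and (\ref{bdy2}) were built to encode. I would also record the minor technical point that the denominator $2\rho_1-P(t+\Delta t,\Delta x)$ stays bounded away from zero for $\Delta x$ small, since $\rho$ is continuous with $\rho_1>0$, so the quotient and its Taylor expansion are legitimate.
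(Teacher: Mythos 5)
Your proposal is correct and follows essentially the same route as the paper: both reduce the corollary to showing that the one-sided boundary updates inject only $O((\Delta x)^2)=O((\Delta t)^2)$ truncation error, using the mass balance together with the constant reservoir density at $x=0$ (and its mirror at $x=2L$), and the momentum balance with $v(t,L)=0$ forcing $\partial_x\rho(t,L)=0$ at the closed valve. Your direct Taylor expansion of the quotient, which exhibits the leading correction as a multiple of $\rho_1\,\partial_x v + v\,\partial_x\rho = -\partial_t\rho = 0$, simply makes explicit the same cancellation that the paper obtains by discretizing the balance law and solving for the boundary value, so the two arguments coincide in substance.
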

\begin{proof}[Proof of Corollary \ref{thmbdy}]
	We have already proved the operator $\varphi$ is consistent and $\ell_2$-stable on an infinite pipe or inside a pipe in previous sections. The next step is to verify that, given boundary conditions whether the operator $\varphi$ stays consistent, i.e., the errors imported by the dynamical boundary conditions are higher or equal to order 2.
\begin{enumerate}
\item Before the valve closure, we have to compute the boundary values for $v(t,0)$ and $v(t,L)$ such that from (\ref{bdy1})  which we have
\begin{align}
&\rho(t,0)\left(\frac{v(t,\Delta x) - v(t,0)}{\Delta x} + O((\Delta x)^2)\right) \nonumber
+ v(t,0)\left(\frac{\rho(t,\Delta x) - \rho(t,0)}{\Delta x} + O((\Delta x)^2)\right) = 0,
\end{align}
which gives
\begin{align*}
&v(t,0)= \frac{\rho(t,0)v(t,\Delta x) + O((\Delta x)^2)}{2\rho(t,0) - \rho(t,\Delta x) + O((\Delta x)^2)}=\\& \frac{\rho(t,0)v(t,\Delta x)}{2\rho(t,0) - \rho(t,\Delta x) + O((\Delta x))^2} + O((\Delta x)^2).
\end{align*}
Set $M_1 = \rho(t,0)v(t,\Delta x)$ and $M_2 = 2\rho(t,0) - \rho(t,\Delta x)$ and then we have
\begin{align*}
v(t,0) = \frac{M_1}{M_2 + O((\Delta x)^2)} = \frac{M_1}{M_2}\cdot\frac{1}{1 + O((\Delta x)^2)/M_2}.
\end{align*}
By Taylor's expanding the term $\frac{1}{1 + (O((\Delta x)^2))/M_2}$ we have that
\begin{align*}
&v(t,0) = \frac{M_1}{M_2}\left(1 - \frac{O((\Delta x)^2)}{M_2} + ...\right)= \frac{M_1}{M_2} + O((\Delta x)^2)\\
&= \frac{\rho(t,0)v(t,\Delta x)}{2\rho(t,0) - \rho(t,\Delta x)} + O((\Delta x)^2).
\end{align*}
These values are known explicitly because they are given to us by the reservoir condition and the initial conditions on the PDE. On the other side of the pipe where $x = L$ we use backward scheme for the spatial derivative and with the same computations we have that for $x = L$:
\begin{align}\label{p2bc1}
v(t,L) = \frac{\rho(t,L)v(t,L - \Delta x)}{2\rho(t,L) - \rho(t,L - \Delta x)} + O((\Delta x)^2).
\end{align}
We replace the numerical iterations values with their values $(\rho,v)$ to $(P,V)$. 

\item After the valve closure, the computation for $v(t,0)$ remains the same as it is in 1), but at $x = L$, $v(t,L)$ is set to 0 as the boundary condition changed. Then we need to compute $\rho(t,L)$ for the completion of the iterations. We have from (\ref{bdy2}) that
\begin{align}\label{p2bc2}
&v(t,L)\partial_xv(t,L) + \frac{1}{\rho(t,L)}\frac{K}{\rho_a}\partial_x\rho + Cv(t,L)|v(t,L)| = 0\\ \nonumber
\end{align}
\end{enumerate}
We update the boundary conditions when the cutoff is introduced to be order two matching the solution which is now reflected off the valve by the extra condition of the boundary conditions.  Indeed, after the valve closure the PDE gives $\partial_x\rho(t,L)=0$. In order to also solve the PDE $\rho$ must be locally constant. We then have that 
\begin{align}
\rho(t,L)=\rho(t,L-\Delta x)
\end{align}
But $\rho(t,L)=P(t,L)+O(\Delta x)^2$ before the valve closure. This allows us to import the errors as only being of order 2 because the process starts anew. 
\end{proof}

\section{Weak convergence with discontinuous initial conditions}\label{weakS}

Until now, we have proved the convergence between the numerical solution of the Lax-Wendroff scheme and the analytical solution of the water hammer problem when the boundary conditions are constant or continuous and without a sudden change (the sudden closure of the valve), i.e., before the valve closure. However, no existing research has shown that the method proposed in \cite{Strang1964} of first variance still works when the boundary conditions are discontinuous (including jumps) and therefore the analytical solution is not smooth either. In this case, we will refer to the weak solutions of the nonlinear hyperbolic conservation law. Meanwhile, the finite difference scheme (Lax-Wendroff) is still valid after the valve closure, such that it simulates the flow state afterwards until the flow achieves a new equilibrium. In this case, we can still possibly have $weak\ convergence$ between the weak solution of the PDE and the numerical solutions of Lax-Wendroff scheme. In other words we have that  that the piecewise constant solution $u^n_j$ from the set of numerical solutions $\{u_j^n\}_{n,j \in \mathbb{N}}$, converges $u_n^j \rightharpoonup u(t,x)$ weakly as $\Delta x, \Delta t \rightarrow 0$. We did not use another numerical method because we did not see how to incorporate these moving boundary conditions otherwise.

Since we have defined the $\mathbb{H}(\Omega)$ function, we can restrict to the region that we are interested in, which is $\Omega_1 := [t_1,T]\times[0,L]$ as the region set which has not already been covered by the previous cases. Let $\mathbb{H}_1:\mathbb{R}^2 \rightarrow \{0,1\}^{2\times2}$ be such that 
\begin{align*}
\mathbb{H}_1(t,x):=\left(\begin{matrix}
h_{11}&h_{12}\\h_{21}&h_{22}
\end{matrix}\right)=\big[H(t - t_1) - H(t - T)\big]\big[H(x) - H(x - L)\big]\cdot \textbf{I}_{2\times2}
\end{align*}
where the Heaviside function $H(\cdot) : \mathbb{R}\rightarrow \{0,1\}$ is defined as
\begin{equation*}
	H(s) = \begin{cases}
	1\ \ s \ge 0\\
	0 \ \ s < 0.
	\end{cases}
\end{equation*}
This $\mathbb{H}_1$ is supported on $[t_1,T]\times [0,L]$ is used to examine the dynamics after the valve closure. Let $\varphi(t,x) := [\varphi_1(t,x) \ \   \varphi_2(t,x)]^\top$, with $\varphi_i(t,x) \in C_0^\infty(\Omega_0), i = 1,2$ be any test functions. We will focus on the domain $\Omega_1$ which is after the valve closure for the following analysis. Using the above $\mathbb{H}_1$ and letting $w\in C^{\infty}([0,t_1)\times [0,2L])^2\cup H^{-1}([t_1,T]\times [0,L])^2$ denote the solution to \eqref{pdep2} for the rest of this section we obtain
\begin{equation*}
\iint\limits_{\Omega_0}\mathbb{H}_1\left(\partial_tw + \partial_xF(w) - G(w)\right)\cdot \varphi \ dxdt = 0.
\end{equation*}
In order to perform integral by parts on the equation above, we need to make sure that the functions or distributions are well defined, and here we consider a sequence of smooth functions $\mathbb{H}_{1,\epsilon}: \mathbb{R}^2\to \{0,1\}^2$ that converges to $\mathbb{H}_1$ in $L^2$ space, as $\epsilon \to 0$, where $\mathbb{H}_{1,\epsilon}$ is defined as
\begin{equation}
\mathbb{H}_{1,\epsilon} := \mathbb{H}_{1} \star \chi_\epsilon, \quad  \star \text{ is the convolution operator,}
\end{equation}
with 
\begin{equation}
\chi_\epsilon:= 
\begin{cases}
\frac{1}{c \epsilon^2}\exp{\left(-\frac{1}{1-|\xi/\epsilon|}\right)} \quad |\xi| < 1\\
0 \quad |\xi| \ge 1
\end{cases},
\end{equation}
and
\begin{equation*}
c = \int\limits_{|\xi| < 1}  \exp{\left(-\frac{1}{1-|\xi|}\right)} d\xi, \quad |\xi| = \sqrt{t^2 + x^2}.
\end{equation*}
So that as $\epsilon \to 0$, we have 
\begin{equation}
\|\mathbb{H}_{1,\epsilon} - \mathbb{H}_1\|_{L^2(\Omega_1)} \to 0.
\end{equation}
Therefore we consider the following integral
\begin{equation}\label{weakform1}
\iint\limits_{\Omega_0}\left(\partial_tw + \partial_xF(w) - G(w)\right)\cdot  (\mathbb{H}_{1,\epsilon}\varphi) \ dxdt, \epsilon > 0
\end{equation}
Integrating by parts, \eqref{weakform1} is equal to
\begin{align*}
&\int\limits_{t_1}^T\left([\mathbb{H}_{1,\epsilon}F(w)\cdot\varphi]^{L+1}_{-1} - \int\limits_{-1}^{L+1}F(w)\cdot \partial_x(\mathbb{H}_{1,\epsilon}\varphi)\ dx\right)dt \\
&+ \int\limits_{-1}^{L+1}\left([\mathbb{H}_{1,\epsilon}w\cdot\varphi]^T_{t_1} - \int\limits_{t_1}^T w\cdot\partial_t(\mathbb{H}_{1,\epsilon}\varphi)dt\right)dx\\
&= \int\limits_{t_1}^T\left([\mathbb{H}_{1,\epsilon}F(w)\cdot\varphi]^{L+1}_{-1} - \int\limits_{-1}^{L+1}F(w)\cdot (\partial_x\mathbb{H}_{1,\epsilon})\varphi + F(w)\cdot(\mathbb{H}_{1,\epsilon}\partial_x\varphi)\ dx\right)dt\\
& + \int\limits_{-1}^{L+1}\left([\mathbb{H}_{1,\epsilon}w\cdot\varphi]^T_{t_1} - \int\limits_{t_1}^T w\cdot(\partial_t\mathbb H_{1,\epsilon})\varphi + u\cdot (\mathbb H_{1,\epsilon}\partial_t\varphi)dt\right)dx\\
& = -\iint\limits_{\Omega_1}w\cdot\partial_t\varphi + F(w)\cdot\partial_x\varphi\ dxdt - \int\limits_{-1}^{L+1}w(t_1,x)\cdot\varphi(t_1,x)dx\\
&- \iint\limits_{\Omega_0}w\cdot(\partial_t\mathbb{H}_{1,\epsilon}\varphi) + F(w)\cdot(\partial_x\mathbb{H}_{1,\epsilon}\varphi)\ dxdt,
\end{align*}
where we have that
\begin{align*}
\iint\limits_{\Omega_0}w\cdot(\partial_t\mathbb{H}_{1,\epsilon}\varphi) + F(w)\cdot(\partial_x\mathbb{H}_{1,\epsilon}\varphi)\ dxdt = 
\iint\limits_{\Omega_0}(w\partial_t\mathbb{H}_{1,\epsilon})\cdot\varphi + (F(w)\partial_x\mathbb{H}_{1,\epsilon})\cdot\varphi\ dxdt
\end{align*}
By the construction of the indicator function and taking $\epsilon \to 0$, at $(t,x) = (t_1,L)$ we have 
\begin{align*}
\lim_{\epsilon \to 0}\partial_t \mathbb{H}_{1,\epsilon} = \partial_t\mathbb{H}_1&= \big[H(x) - H(x - L)\big]\delta(t - t_1)\cdot\mathbf{1}_{2\times2}\\ 
\lim_{\epsilon \to 0}\partial_x \mathbb{H}_{1,\epsilon} = \partial_x\mathbb{H}_1&= -\big[H(t - t_1) - H(t - T)\big]\delta(x - L)\cdot\mathbf{1}_{2\times2}.
\end{align*}
Then we have
\begin{align*}
\iint\limits_{\Omega_0}(w\partial_t\mathbb{H}_1)\cdot\varphi + (F(w)\partial_x\mathbb{H}_1)\cdot\varphi\ dxdt = 
\int\limits_0^Lw(t_1,x)\cdot\varphi(t_1,x)dx + \int\limits_{t_1}^TF(w(t,L))\varphi(t,L)dt.
\end{align*}
Applying the initial and boundary conditions, the solution $w$ therefore satisfies the integral formulation, which is the same form as in Theorem \ref{main2}:
\begin{align}\label{weakform2}
	\iint\limits_{\Omega_0}w\cdot\partial_t\varphi + F(w)\cdot\partial_x\varphi\  &- G(w)\cdot \varphi \ dxdt + \int\limits_0^{L}w(t_1,x)\cdot\varphi(t_1,x)dx \\ \nonumber
	& +\int\limits_0^Lw(t_1,x)\cdot\varphi(t_1,x)dx - \int\limits_{t_1}^TF(w(t,L))\varphi(t,L)dt = 0.
\end{align}
Note that the solution $w$ is not smooth due to the sudden change of the boundary condition at $x = L$ at time $t = t_1$, whence the assumption $w\in H^{-1}([t_1,T]\times [0,L])^2$.  \\

Recall that $\Delta t$ and $\Delta x$ are minimal time and space unit of discretization, and then we have a series of evolution points for the FDM inside the domain $\Omega$ such that
\begin{align*}
	(n\Delta t, j\Delta x), \ \ n,j = 0,1,2,3,...
\end{align*}
In total, we have $(N_x + 1)\times (N_t + 1)$ points, where $N_x := \frac{L}{\Delta x}$ and $N_t := \frac{T}{\Delta t}$. On top of those discrete points, suppose we have a set of (normalized) numerical solutions $\{U_j^n\}_{(n,j)\in I}$ which are computed from the Lax-Wendroff scheme \eqref{fds}, such that the corresponding numerical solution which is not normalized, $W_j^n=(\rho_j^n, q_j^n)$, satisfies
\begin{equation}\label{weaklw}
	\partial_{t,\Delta t}W + \frac{F(W(t,x + \Delta x)) - F(W(t,x-\Delta x))}{2\Delta x} = G(W),
\end{equation}
where $\partial_{t,\Delta t}W$ in \eqref{weaklw} is approximated by first and second order space derivatives so that
\begin{align*}
	\partial_{t,\Delta t}W :&= \frac{W(t + \Delta t) - W(t)}{\Delta t} - \frac{\Delta t}{2}\left(\partial_x(\partial_wF)\cdot\partial_xF  + \partial_wF\partial_x^2F + \partial_x(\partial_wF\cdot G)\right) + O(\Delta t^2).
\end{align*}
Now we proceed to the proof of the second main theorem. We first recall the following elementary definition
\begin{defn}
The order of a distribution $f:\mathcal{D}(\Omega)\rightarrow \mathbb{R}$ with $\Omega \subset \mathbb{R}^2 $ an open domain is the smallest integer $N\in\mathbb{N}$ independent of $\Omega$ such that the following inequality holds
\begin{align}
\left|\int\limits_{\Omega}f(\varphi)\,dxdt\right|\leq C_{\Omega}\|\varphi\|_{\Omega,N} \quad \varphi\in C_c^{\infty}(\Omega)
\end{align}
with some constant $C_{\Omega}$ depending on $\Omega$ and 
\begin{align}
\|\varphi\|_{\Omega,N}=\sup\limits_{\substack {\alpha\leq N \\ (t,x)\in\Omega}}|D^{\alpha}\varphi|. 
\end{align}
\end{defn}
The norm of the test functions denoted above will show up in the proof and help us prove Theorem \ref{main2}. 
\begin{proof}[Proof of Theorem \ref{main2}]
	For the convenience of computation and without the loss of generality, we reset $t_1 = 0$, accordingly $T$ becomes $T-t_1$. Let us first discretize the domain of $\Omega_0$, such that it is combined with every unit domain $\Omega_j^n$:
	\begin{align}
		\Omega_0 := \bigcup_{n = 1}^{N_t}\bigcup_{j = 1}^{N_x}\Omega_j^n,
	\end{align}
	where each unit of $\Omega_j^n$ is defined by
	\begin{align}
		\Omega_j^n := \big[(n-1)\Delta t,n\Delta t\big]\times\big[(j - 1)\Delta x,j\Delta x\big].
	\end{align}
	Then we can extend $W_j^n$ to be piecewise constant functions as $W_j^n(t,x)$, on each unit of domain $\Omega_j^n$ with
	\begin{equation}\label{extendu}
		W^n_j(t,x) = 
		\begin{cases}
			W^n_j, \ \ (t,x)\in \Omega^n_j\\
			0, \ \ \ \text{otherwise}.
		\end{cases}
	\end{equation}
	We let $\{\varphi_j^n(t,x)\}_{n,j\in \mathbb{N}}$ be a set of test functions that defined on each unit of domain $\Omega(\varphi)^n_j$ such that
 \begin{equation}\label{testfunc}
		\varphi^n_j(t,x) := \frac{1}{5\Delta t\Delta x}\iint\limits_{\Omega(\varphi)_j^n}\varphi(t,x)\ dxdt, 
	\end{equation}
 where
	\begin{align}
		\Omega(\varphi)^n_j:= \Omega_{j-1}^n\cup\Omega_j^{n-1}\cup\Omega_j^n\cup\Omega_j^{n+1}\cup\Omega_{j+1}^n.
	\end{align}
	
	To prove convergence, we define $\tilde{\varphi}(t,x)$ and $\tilde{W}(t,x)$ such that for any $(t,x)\in\Omega_0$ we have,
	\begin{align}\label{discsum1}
		\tilde{\varphi}(t,x) &:= \sum_{n = 1}^{N_t}\sum_{j=1}^{N_x}\varphi^n_j(t,x),
	\end{align}
		\begin{align}\label{discsum2}
		\tilde{W}(t,x) &:= \sum_{n = 1}^{N_t}\sum_{j=1}^{N_x}W^n_j(t,x),
	\end{align}
	so that the 2 piece-wise constant functions above can approximate the distributional solution $w(t,x)$ and the test function $\varphi(t,x)$. Accordingly, the indicator function $\mathbb{H}_1$ also needs to be discretized, we set
	\begin{equation}\label{ind}
		\tilde{\mathbb{H}}(t,x) := \sum_{n = 1}^{N_t}\sum_{j=1}^{N_x}\mathbb{H}^n_j(t,x) = \left(\sum_{n = 1}^{N_t}\sum_{j=1}^{N_x}\mathbf{1}_{\Omega_j^n}(t,x) \right)\cdot\textbf{I}_{2\times2},
	\end{equation}
	where
	\begin{equation*}
		\mathbf{1}_{\Omega_j^n}(t,x):=\begin{cases}
		1,\ \ (t,x)\in\Omega_j^n\\
		0,\ \ \ \text{otherwise}.
		\end{cases}
	\end{equation*}
	Putting the discrete sum \eqref{discsum1} and \eqref{discsum2} into the integral (weak) form to get
	\begin{align}\label{discweakform}
		I(\tilde{W}):=\iint\limits_{\Omega_1}\tilde{W}\cdot\partial_t\tilde{\varphi} &+ F(\tilde{W})\cdot\partial_x\tilde{\varphi} - G(\tilde{W})\cdot\tilde{\varphi} \ dxdt \\ \nonumber
		&+ 2\int\limits_0^L\tilde{W}(t_1,x)\cdot\tilde{\varphi}(t_1,x)dx - \int\limits_{t_1}^TF(\tilde{W}(t,L))\cdot\tilde{\varphi}(t,L)dt
	\end{align} 
	We say the discrete sum function $\tilde{W}$ induced by the numerical solution set $\{W_j^n\}_{n,j\in \mathbb{N}}$, weakly converges to the distributional solution $w$ if the integral $I(\tilde{W})$ in \eqref{discweakform} approaches 0 when $\Delta x, \Delta t \to 0$ (which means $N_x,N_t \rightarrow \infty$, but $N_x\Delta x = L,N_t\Delta t = T$) for any test function $\varphi$, denoted by $\tilde{W}(t,x) \rightharpoonup w(t,x)$ for any $(t,x) \in \Omega_0$. The following is how we compute term by term for $I(\tilde{W})$.
	
	For convenience, we also let $F(W^n_j)$ be short for $F(W^n_j(t,x))$, and let $G(W^n_j)$ be short for $G(W^n_j(t,x))$. Similar to the method from \cite{PerthameSimeoni2003}, we multiply the equation \eqref{weaklw} by $\varphi_j^n(t,x)$ and sum over $n$ and $j$, to obtain 
	\begin{align}\label{weaksum}
	\sum_{n = 1}^{N_t}\sum_{j = 1}^{N_x}\mathbb{H}^n_j\left(\frac{F(W_{j+1}^n) - F(W_{j-1}^n)}{2\Delta x}  + \partial_{t,\Delta t}W  - G(W_j^n)\right)\cdot\varphi_j^n\Delta x\Delta t = 0 ,
	\end{align}
	where $\partial_{t,\Delta t}W$ is the same as in \eqref{weaklw}. Consider the first summation in \eqref{weaksum}, we have 
	\begin{align}
& S_x := \sum_{n = 1}^{N_t}\sum_{j = 1}^{N_x}\mathbb{H}^n_j\left(\frac{F(W_{j+1}^n) - F(W_{j-1}^n)}{2\Delta x}\right)\cdot\varphi_j^n\Delta x\Delta t=\underbrace{\sum_{n = 1}^{N_t}\sum_{j = 1}^{N_x-1}\frac{F(W_{j+1}^n) - F(W_{j-1}^n)}{2\Delta x}\cdot\varphi_j^n\mathbf{1}_{\Omega_j^n} \Delta x\Delta t}_{S_{x,1}} \\ \nonumber
	&+ \underbrace{\sum_{n = 1}^{N_t}\frac{F(W_{N_x+1}^n) - F(W_{N_x-1}^n)}{2\Delta x}\cdot\varphi_{N_x}^n\mathbf{1}_{\Omega_{N_x}^n}\Delta x\Delta t}_{S_{x,2}}
	\end{align}
by summation by parts, we have that
\begin{align*}
& S_{x,1}= \sum_{n = 1}^{N_t}\sum_{j = 1}^{N_x-1}(F(W_{j+1}^n) - F(W_{j-1}^n))\cdot\varphi_j^n\mathbf{1}_{\Omega_j^n}) \Delta t = \\&\sum_{n = 1}^{N_t}\bigg[\underbrace{F(W^n_2)\cdot\varphi^n_1 - F(W^n_{0})\cdot\varphi^n_1}_{j = 1} + \underbrace{F(W^n_3)\cdot\varphi^n_2 - F(W^n_1)\cdot\varphi^n_2}_{j = 2} + \\&\underbrace{F(W^n_{4})\cdot\varphi^n_3 - F(W^n_2)\cdot\varphi^n_3}_{j = 3} + ... + \underbrace{F(W^n_{N_x})\cdot\varphi^n_{N_x-1} - F(W^n_{N_x - 2})\cdot\varphi^n_{N_x-1}}_{j = N_x-1}\bigg]\mathbf{1}_{\Omega_j^n}\Delta t.
	\end{align*}
Now we re-pair and classify the summation with $F(W^n_j)$ instead of $\varphi^n_j$ in order to construct derivatives of $\varphi^n_j$, which is similar to integration by parts 
	\begin{align*}
		&S_{x,1}= \sum_{n = 1}^{N_t}\left(\frac{1}{2}\sum_{j = 1}^{N_x}F(W_j^n)\cdot(\varphi_{j - 1}^n - \varphi_{j + 1}^n)\right)\Delta t \\ \nonumber
		&+ \frac{1}{2}\sum_{n = 1}^{N_t}(F(W_{N_x}^n)\cdot\varphi_{N_x - 1}^n + F(W_{N_x - 1}^n)\cdot\varphi_{N_x}^n - F(W_0^n)\cdot\varphi_{1}^n - F(W_{1}^n)\cdot\varphi_0^n)\mathbf{1}_{\Omega_j^n}\Delta t\\ 
		& = \sum_{n = 1}^{N_t}\left(\sum_{j = 1}^{N_x}F(W_j^n)\cdot\frac{(\varphi_{j - 1}^n - \varphi_{j + 1}^n)}{2\Delta x}\Delta x\right)\Delta t +\frac{1}{2}\sum_{n = 1}^{N_t}F(W_{N_x - 1}^n)\cdot\varphi_{N_x}^n\Delta t + \frac{1}{2}\sum_{n = 1}^{N_t}F(W_{N_x}^n)\cdot\varphi_{N_x-1}^n\Delta t.
	\end{align*}
	These 2 terms $F(W_0^n)\cdot\varphi_{1}^n $ and $F(W_{1}^n)\cdot\varphi_0^n$ will vanish using \eqref{testfunc}, \eqref{extendu} and \eqref{ind}. So we
	consider the rest of the terms and take the limit when $\Delta t,\Delta x \rightarrow 0$. Note that from the boundary condition of \eqref{pdep2}, we know due to the sudden closure of the valve, there is a discontinuity in both $\rho$ and $q$ at $(t,x) = (t_1,L)$, which leads to the discontinuity in $F(W(t,x))$ at $t_1$ and $L$ as well. Therefore, we need to exclude the term $F(W^1_{N_x - 1})$ because of the discontinuity introduced by closing the valve. So we have
	\begin{align*}
		&\lim_{\Delta t,\Delta x \to 0}\frac{1}{2}\sum_{n = 2}^{N_t}F(W_{N_x - 1}^n)\cdot\varphi_{N_x}^n\Delta t + \frac{1}{2}\sum_{n = 1}^{N_t}F(W_{N_x}^n)\cdot\varphi_{N_x-1}^n\Delta t=\\
		& \lim_{\Delta t,\Delta x\to 0}\frac{1}{2}\int\limits_{t_1 + \Delta t}^TF(\tilde W(t,L-\Delta x))\tilde \varphi(t,L)dt + \frac{1}{2}\int\limits_{t_1}^TF(\tilde{W}(t,L))\tilde \varphi(t,L-\Delta x)dt \\
		&= \int\limits_{t_1}^TF(w(t,L))\cdot\varphi(t,L)dt,
	\end{align*}
	and then for some positive constant $C_{\Omega_{N_x}^1,F}$ depending on $\Omega_{N^x}$ and the Lipschitz constant of $F$
	\begin{align*}
		\lim\limits_{\Delta t,\Delta x \to 0} F(W^1_{N_x - 1})\cdot\varphi^1_{N_x-1}\, \Delta t &= \lim_{\Delta t,\Delta x \to 0}\frac{F(W^1_{N_x - 1}) - F(W^1_{N_x + 1})}{2\Delta x} \cdot\varphi_{N_x}^1\mathbf{1}_{\Omega_{N_x}^1}\, \Delta x\Delta t\\
		& \le \lim_{\Delta t\to 0}\Delta t\cdot|\langle\partial_xF(\tilde{W}(t_1,x))|_{x = L},\tilde{\varphi}(t_1,x)\rangle|\\
		& = \lim_{\Delta t\to 0}\Delta t\cdot C_{\Omega_{N_x}^1,F}\cdot\|\varphi\|_{\Omega_{N_x}^1,1} = 0
	\end{align*}
Moreover we also have
	\begin{align*}
		&\lim_{\Delta t,\Delta x \to 0}S_{x,2} = \lim_{\Delta t,\Delta x \to 0}\sum_{n = 1}^{N_t}\frac{F(W_{N_x+1}^n) - F(W_{N_x-1}^n)}{2\Delta x}\cdot\varphi_{N_x}^n\mathbf{1}_{\Omega_{N_x}^n}\Delta x\Delta t = \\& \lim\limits_{\Delta x\to 0}\Delta x\cdot\int\limits^T_{t_1}\partial_xF(w(t,L))\cdot\varphi(t,L)dt
		 = \lim_{\Delta x\to 0} \Delta x\cdot F(w(t_1^-,L))\int\limits_{t_1}^T\delta(t - t_1)\varphi(t,L)dt =\\&  \lim_{\Delta x\to 0}\Delta xF(w(t_1,L))\varphi(t_1,L) = 0.
	\end{align*}
Combining the summations we obtain
	\begin{align*}
	\lim_{\Delta t,\Delta x \to 0}	S_{x}&=\lim_{\Delta t,\Delta x \to 0}S_{x,1} + \lim_{\Delta t,\Delta x \to 0}S_{x,2}= -\iint\limits_{\Omega_1}F(\tilde{W})\cdot\partial_x\tilde \varphi(t,x)dxdt + \int\limits_{t_1}^TF(\tilde W(t,L))\cdot\tilde \varphi(t,L)dt\\
	& = -\iint\limits_{\Omega_1}F(w)\cdot\partial_x\varphi(t,x)dxdt + \int\limits_{t_1}^TF(w(t,L))\cdot\varphi(t,L)dt.
	\end{align*}
Next, we compute for the summation over time part,
	\begin{equation}
		S_t = \sum_{n = 1}^{N_t}\sum_{j = 1}^{N_x}\mathbb{H}^n_j\partial_{t,\Delta t}W\cdot\varphi_j^n\Delta x\Delta t
	\end{equation}
	with
	\begin{equation}
	\partial_{t,\Delta t}W = \frac{W_j^{n + 1} - W_j^{n}}{\Delta t} - \frac{\Delta t}{2}\left(\partial_x(\partial_wF)\cdot\partial_xF + \partial_wF\partial_x^2F + \partial_x(\partial_wF\cdot G)\right) + O(\Delta t^2).
	\end{equation}
Then we have that the time derivative part of the sum is
	\begin{align*}
		S_t =  \underbrace {\sum_{n = 2}^{N_t}\sum_{j = 1}^{N_x}\partial_{t,\Delta t}W\cdot\varphi_j^n\mathbf{1}_{\Omega^n_j}\Delta x\Delta t}_{S_{t,1}} +  \underbrace{\sum_{j = 1}^{N_x}\partial_{t,\Delta t}W(0,x)\cdot\varphi_{N_x}^1\mathbf{1}_{\Omega^1_j}\Delta x\Delta t}_{S_{t,2}}.
	\end{align*}
	However at boundary side $x = L$, the flow is cut off, so the PDE becomes a linear homogeneous transport equation:
	\begin{equation}
		\mathbb{H}(t,L^-)(\partial_tw + \tilde A\partial_xw - G(w)) = 0,
	\end{equation}
	where 
	\begin{equation}
		\tilde A= \left(\begin{matrix}
		0&1\\\frac{K}{\rho_a} & 0
		\end{matrix}\right)
	\end{equation}	
	We have by Lax-Wendroff that
	\begin{align}
	\partial_tw(t,L) = \frac{w(t+\Delta t,L) - w(t,L)}{\Delta t} - \frac{\Delta t}{2}\tilde{A}^2\partial_x^2w(t,L) + O(\Delta t^2).
	\end{align}
	Now we have the summation for $\Delta x$ at $t \in [0,\Delta t]$ that
	\begin{align*}
	&S_{t,2} = \\& \sum_{j = 1}^{N_x-1}\left(\frac{W_{j}^{2} - W_{j}^{1}}{\Delta t} + O(\Delta t)\right)\cdot\varphi_{j}^1\mathbf{1}_{\Omega^1_j}\Delta x\Delta t - \tilde A^2\left(\frac{\Delta t}{2}\frac{W_{N_x + 1}^1 - 2W_{N_x}^1 + W_{N_x - 1}^1}{\Delta x^2}\right)\cdot\varphi_{N_x}^1\mathbf{1}_{\Omega^1_{N_x}}\Delta x\Delta t.
	\end{align*}
	Therefore we have 
	\begin{align*}
	\lim_{\Delta t,\Delta x \to 0}S_{t,2} &:= \lim_{\Delta t\to 0}\Delta t\cdot\int\limits_0^L\tilde{H}(t_1,x)\partial_t\tilde W(t_1,x)\cdot\tilde \varphi(t_1,x) dx + O((\Delta t)^2)\cdot\int\limits_0^L\tilde \varphi(t_1,x)dx\\
	&+ \lim_{\Delta t,\Delta x \to 0}\tilde A^2\left(\frac{\Delta t}{2}\frac{W_{N_x + 1}^1 - 2W_{N_x}^1 + W_{N_x - 1}^1}{\Delta x^2}\right)\cdot\varphi_{j}^1\Delta x\Delta t\\
	&= \int\limits_0^L\langle \partial_tw(t,x),\mathbb{H}_1(t,x)\varphi(t,x)\rangle\  dx + \lim_{\Delta t\to 0} \frac{\Delta t}{2}\langle\tilde{A}^2\partial_x^2w(t,L),\varphi(t,L)\rangle,
	\end{align*}
since \begin{align*}
\lim_{\Delta t\to 0} \frac{\Delta t}{2}|\langle\partial_x^2w(t,L),\varphi(t,L)\rangle| \le \lim_{\Delta t\to 0}\frac{\Delta t}{2}\cdot C_{\Omega_0,w}\|\varphi\|_{\Omega_0,2} = 0,
\end{align*}
where $C_{\Omega_0,w}$ is a constant which depends on $\Omega_0$ and $w$ so that 
\begin{align*}
\lim\limits_{\Delta t,\Delta x\rightarrow 0}S_{t,2} = -\int\limits_0^Lw(t_1,x)\cdot\varphi(t_1,x)dx.
\end{align*}

	In the summation $S_{t,1}$, since we have excluded the case when $t = t_1$ where there is a discontinuity in $u$, then $S_{t,1}$ is a distribution of order 0 whenever $\Delta t$ or $\Delta x$ goes to zero, and with the assumption that $|U_j^n|\le C_h, \forall n,j$ and $F,G \in C^{\infty}(\Omega)$ then we have for $S_{t,1}$
	\begin{equation}
	S_{t,1} := \sum_{j = 1}^{N_x}\sum_{n = 2}^{N_t}\mathbb{H}^n_j\left(\frac{W^{n+1}_j - W^{n}_j}{\Delta t}\cdot\varphi_j^n+O(\Delta t)\right)\Delta x\Delta t
	\end{equation}
	and by summation by parts of $S_{t,1}$ we have
	\begin{align*}
&S_{t,1} = \sum_{j = 1}^{N_x}\sum_{n = 2}^{N_t}(W^{n+1}_j - W^{n}_j)\cdot\varphi_j^n\mathbf{1}_{\Omega^n_j}\Delta x + O(\Delta t)\cdot\text{Area}(\Omega_1)\\
		& = \sum_{j = 1}^{N_x}\sum_{n = 2}^{N_t}W^{n+1}_j\cdot(\varphi_j^n - \varphi^{n+1}_j)\Delta x+ \sum_{j = 1}^{N_x}-W^1_j\cdot\varphi^1_j\mathbf{1}_{\Omega^n_j}\Delta x + \sum_{j = 1}^{N_x}W^{N_t+1}_j\cdot\varphi^{N_t}_j\mathbf{1}_{\Omega^n_j}\Delta x+O(\Delta t)\cdot\text{Area}(\Omega_1)\\
		& = \sum_{j = 1}^{N_x}\sum_{n = 2}^{N_t}W^{n+1}_j\cdot\left(\frac{\varphi_j^n - \varphi^{n+1}_j}{\Delta t}\right)\Delta t\Delta x - \sum_{j = 1}^{N_x-1}W^1_j\cdot\varphi^1_j\Delta x\\
		& + O(\Delta t)\cdot\text{Area}(\Omega_1)\\
		& = -\iint\limits_{\Omega_1}\tilde{W}\cdot\partial_t\tilde{\varphi}dtdx - \int\limits_0^L\tilde{W}(t_1,x)\cdot\tilde{\varphi}(t_1,x)dx + O(\Delta t)\cdot\text{Area}(\Omega_1).
	\end{align*}
	Therefore taking the limit and applying the initial condition we have
	\begin{equation}
	\lim_{\Delta t,\Delta x\to 0}S_{t,1} = \iint\limits_{\Omega_1} w(t,x)\partial_t\varphi(t,x)dtdx - \int\limits_0^Lw(t_1,x)\varphi(t_1,x)dx.
	\end{equation}
	The nonlinear term $G(W^n_j)$ is models a distribution of order 0 for any $n$ and any $j$, so that the discrete summation is simple as
\begin{align*}
\lim_{\Delta t,\Delta x \to 0}S_{nonl} =\lim_{\Delta t,\Delta x \to 0}\sum_{n = 1}^{N_t}\sum_{j = 1}^{N_x}\mathbb{H}^n_jG(W^n_j)\cdot\varphi^n_j\Delta x\Delta t&\\= \lim_{\Delta t,\Delta x \to 0}\iint\limits_{\Omega_1}G(\tilde W)\cdot\tilde \varphi \ dxdt = \iint\limits_{\Omega_1}G(w)\cdot\varphi\ dxdt.
\end{align*}
	Suming up the 3 terms $S_x$, $S_t$ and $S_{nonl}$ and when $\Delta x, \Delta t \rightarrow 0$, we have $I(\tilde{W})$ in \eqref{discweakform} converges to 
	\begin{align*}
\iint\limits_{\Omega_0}w\cdot\partial_t\varphi + F(w)\cdot\partial_x&\varphi\ - G(w)\cdot \varphi \ dxdt +  \\ \nonumber
		& 2\int\limits_0^Lw(t_1,x)\cdot\varphi(t_1,x)dx + \int\limits_{t_1}^TF(w(t,L))\varphi(t,L)dt = 0
	\end{align*}
	so that Theorem \ref{main2} is proved.
\end{proof}

\section{Simulations}\label{simul}
In this section, we will realize some simulations of the Lax-Wendroff Scheme with the boundary conditions that are elaborated in Section \ref{bcS}. Specifically, we will present some results by varying the Courant number (see \cite{Roostaei2017} for details) and the mesh size of our model and finally achieve the convergence that we proved in Section \ref{weakS}.
\subsection*{Settings}
In order to show some features of the numerical method, we performed some simulations. We consider a pipe of length $L = 20$ [m] and diameter $D = 0.2$ [m]. We consider the constant $K = 1/\beta$, where we have set $\beta = 4 \times 10^{-9}$, so  $c = \sqrt{\frac{K}{\rho_a}} = 500$ [m/s] is the sound speed. The friction coefficient has been chosen to be $c_f = 2$ to show the velocity's dissipation within a few periods of the shockwave. We consider the atmospheric pressure $p_a = 1.01 \times 10^5$ [Pa]. The experiment is performed in the time interval $[0, 0.8]$ [s] and the time of closing is $t_1 = 0.04$ [s].  In the waterhammer equation we consider $C = \frac{c_f}{2D}$. 
The initial conditions are given by $\rho(0,x) = 1000$ [kg/m] and $v(0,x) = 1$ [m/s]. 
The boundary conditions we consider are:
\begin{itemize}
	\item Density:
	\begin{itemize}
		\item $\rho(t,0) = 1000$ \, \text{[kg/m$^3$]} \quad $t\geq 0$,
		\item $\rho(t, 2L) = 1000$ \, \text{[kg/m$^3$]} \quad $0\leq t < t_1$, \qquad $\partial_x \rho(t,L) = 0$ \, \text{[kg/m$^4$]} \quad $t \geq t_1$.
	\end{itemize}
	\item Velocity:
		\begin{itemize}
			\item $v(t,L) = 0$ \quad $t \ge t_1$,
			\item Other boundary conditions are by using the related formulas in Section \ref{bcS}.
		\end{itemize}
\end{itemize}

\subsection*{General behavior.}
In this section the mesh is set with $\Delta x = 0.1$ [m] and $\Delta t = \Courant \frac{\Delta x}{c}$ for $\alpha = 2$, so the Courant number is $\Courant = c \frac{\Delta t}{\Delta x} = 0.5$. We observe the velocity and the pressure. We can observe how the velocity decreases before the closing of the valve, due to the friction coefficient. The behavior changes suddenly at $t=t_1$ when the valve closes, and we see how this perturbation is propagated in both velocity and pressure in Figure \ref{heatmap}, while the velocity is still dissipating due to friction. We observe how the behavior tends to equilibrium as $t$ gets larger. 

\begin{figure}
	\centering
	\includegraphics[width = 0.6\textwidth, trim=0.5cm 4.4cm 0.5cm 4.2cm, clip]{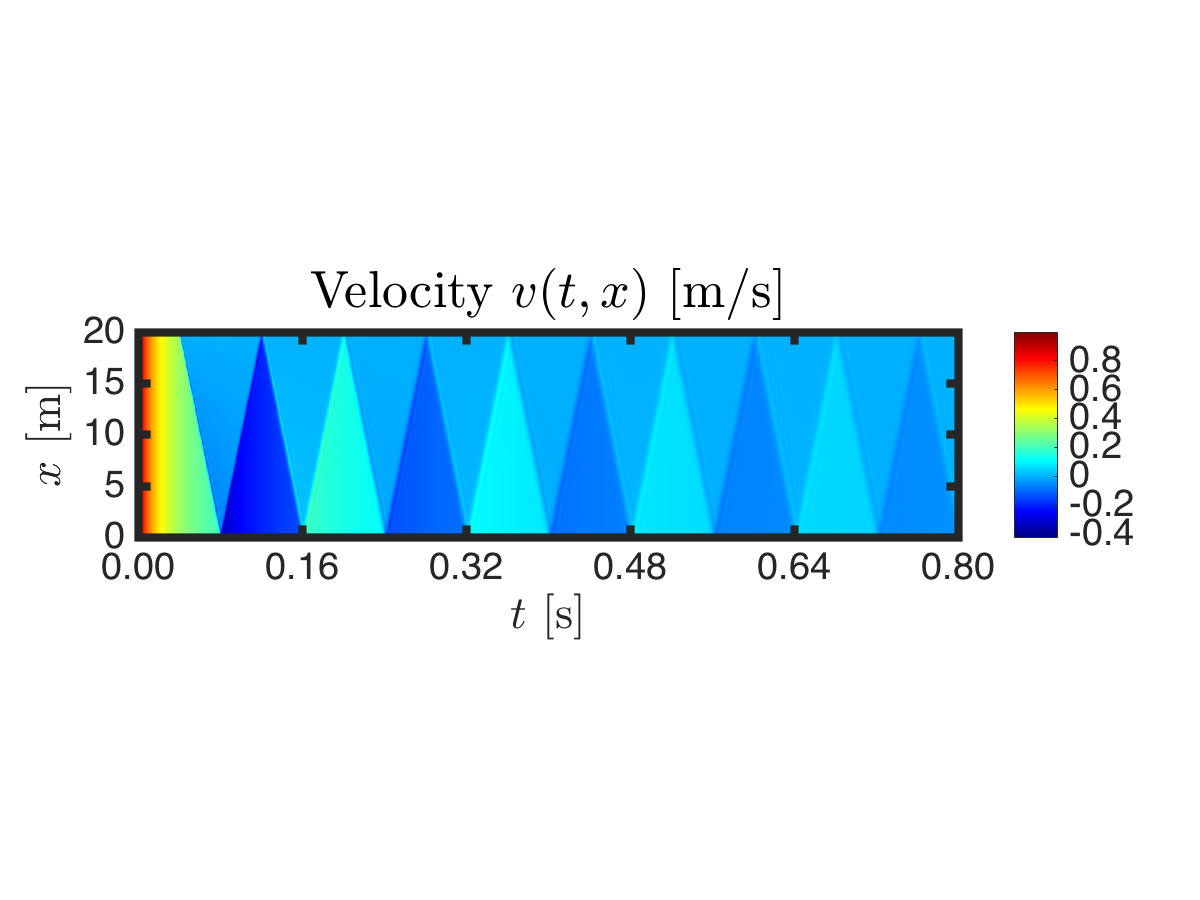}\\
	\includegraphics[width = 0.6\textwidth, trim=0.5cm 4.4cm 0.5cm 4.2cm, clip]{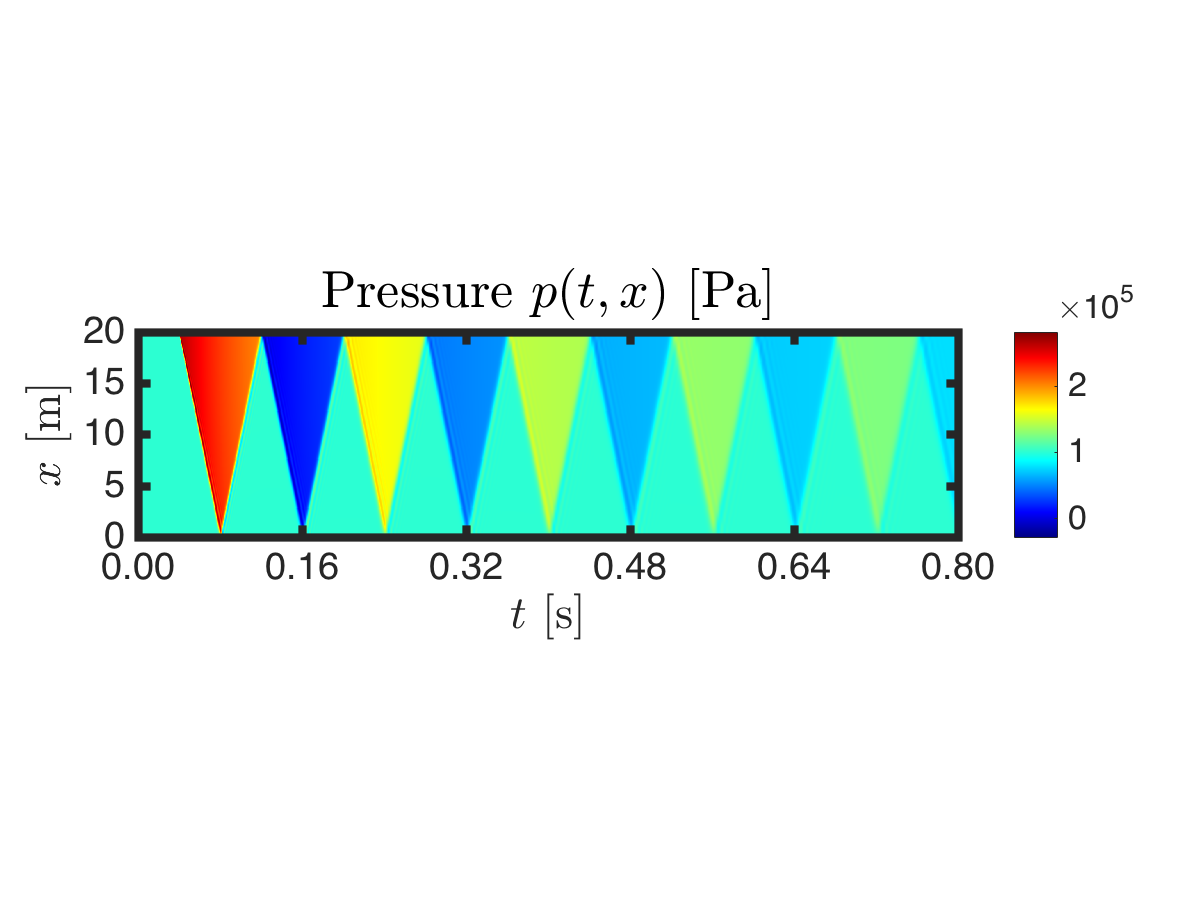}
	\caption{Heatmap of velocity and pressure after the valve closure}
	\label{heatmap}
\end{figure}
Next, we show the behavior of the velocity and pressure in the pipe, observed only at $x = \frac{L}{2}$. We see some oscillations (Figure \ref{behaviorvp}), which are due to the Lax-Wendroff scheme and we observe the numerical viscosity, see, e.g., \cite{Ramshaw1994numerical}. 

\begin{figure}
\centering
\includegraphics[width = 0.6\textwidth, trim=0.5cm 4.2cm 0.5cm 4.2cm, clip]{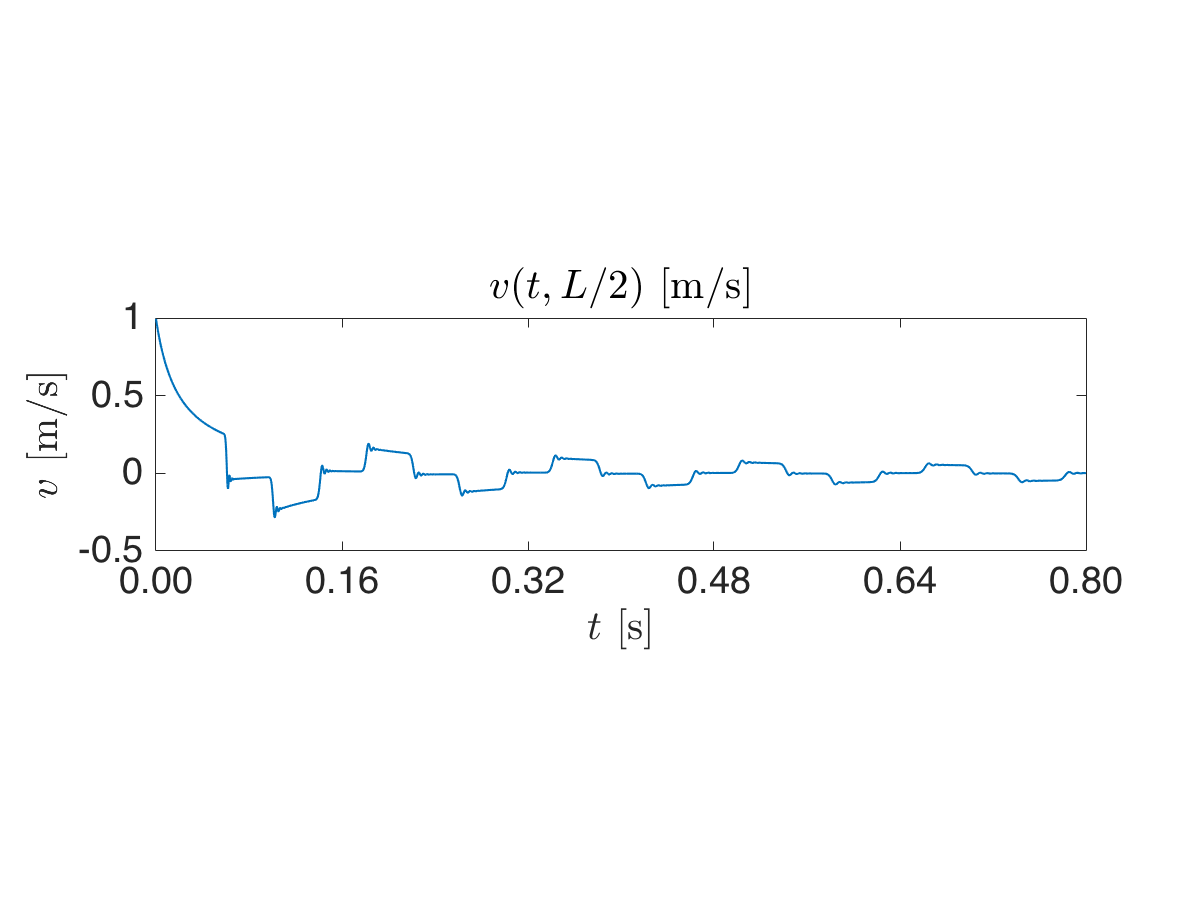}
\\
\includegraphics[width = 0.6\textwidth, trim=0.5cm 4.2cm 0.5cm 4.2cm, clip]{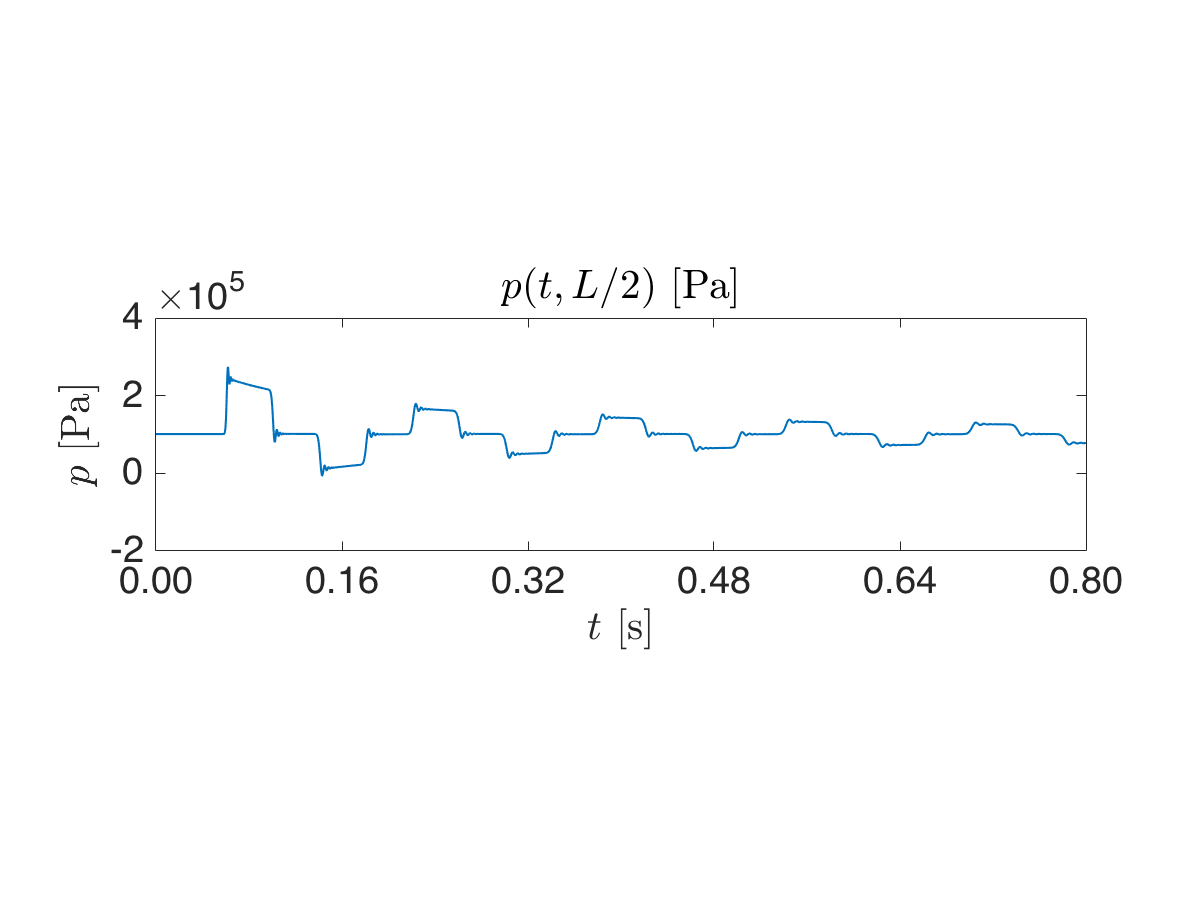}
\caption{Behavior of the velocity and pressure in the pipe.}
\label{behaviorvp}
\end{figure}

\subsection*{Vary the time step $\Delta t$}
The mesh is set with $\Delta x = 0.1$ [m] and $\Delta t = \Courant \frac{\Delta x}{c}$ for some values of $\Courant \leq 1$. 

Notice that the choice has been made such that $\frac{\Delta x}{\Delta t} \geq c$, which is a reasonable condition to make the scheme to ``see'' the physical behavior by ``being faster'' than it.

We observe in figure \ref{FigCourants} that for $\Courant = 1$ the scheme does not present oscillations but it does for another values, for example, $\Courant = 0.5$ or $\Courant = 0.1$. 
\begin{figure} 
	\centering
	\noindent
	\includegraphics[width = 0.45\textwidth, trim=0.5cm 1.8cm 0.5cm 2.5cm, clip]{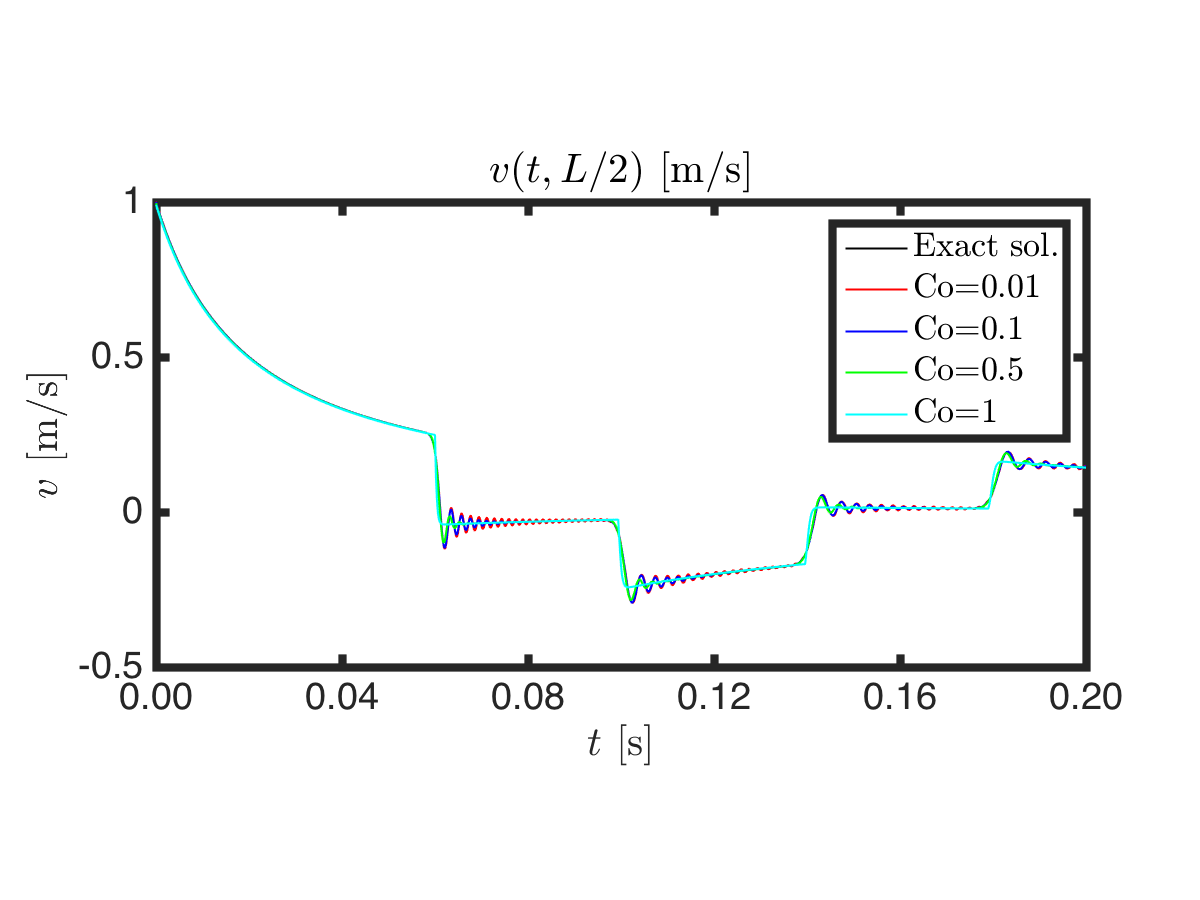}
	\includegraphics[width = 0.45\textwidth, trim=0.5cm 1.8cm 0.5cm 2.5cm, clip]{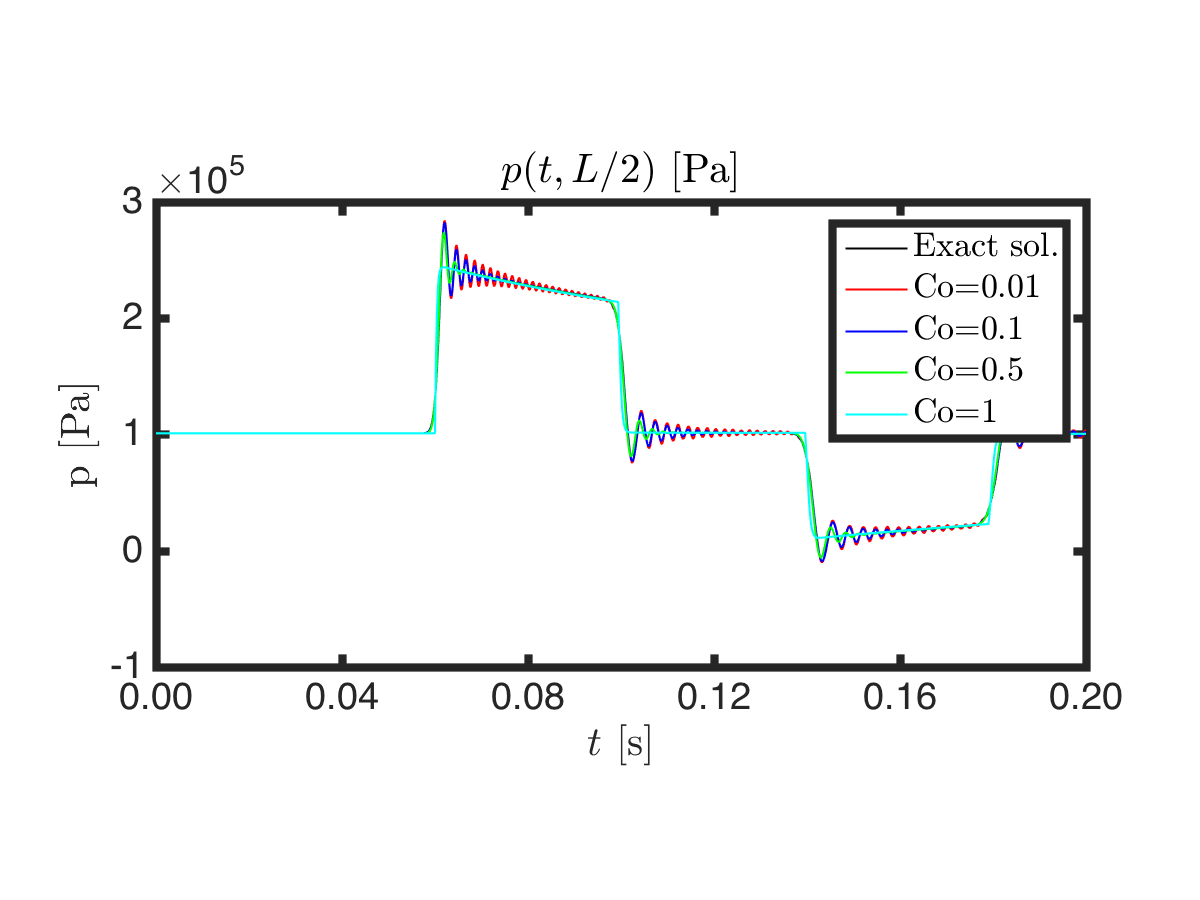}
	\caption{Behavior of $v$ and $p$ at $x=L/2$ by varying $\Courant$.}
	\label{FigCourants}
\end{figure}
\begin{remark}
Due to high computational cost, it is not recommendable to use much lower values of $\Courant$ if the time frame of interest is not much smaller than the one used in these experiments.
\end{remark}
\subsection*{Mesh sensitivity}
Since we have a result on convergence, we expect to see that if we use a finer mesh we get more accurate results. However, the computational cost could be too much if we use a much finer mesh. We did the following simulations,  in Figure \ref{deltax05}, in order to show that in practice we can reach accurate results for certain mesh sizes and making it finer will not improve the results significantly while increasing the computational cost too much. The mesh is set with $\Delta x \in \{0.5, 0.1, 0.05\}$ [m] and $\Delta t =  \frac{\Delta x}{10 c}$.
\begin{figure}
	\centering
	\noindent
	\includegraphics[width = 0.45\textwidth, trim=0.5cm 1.8cm 0.5cm 2.5cm, clip]{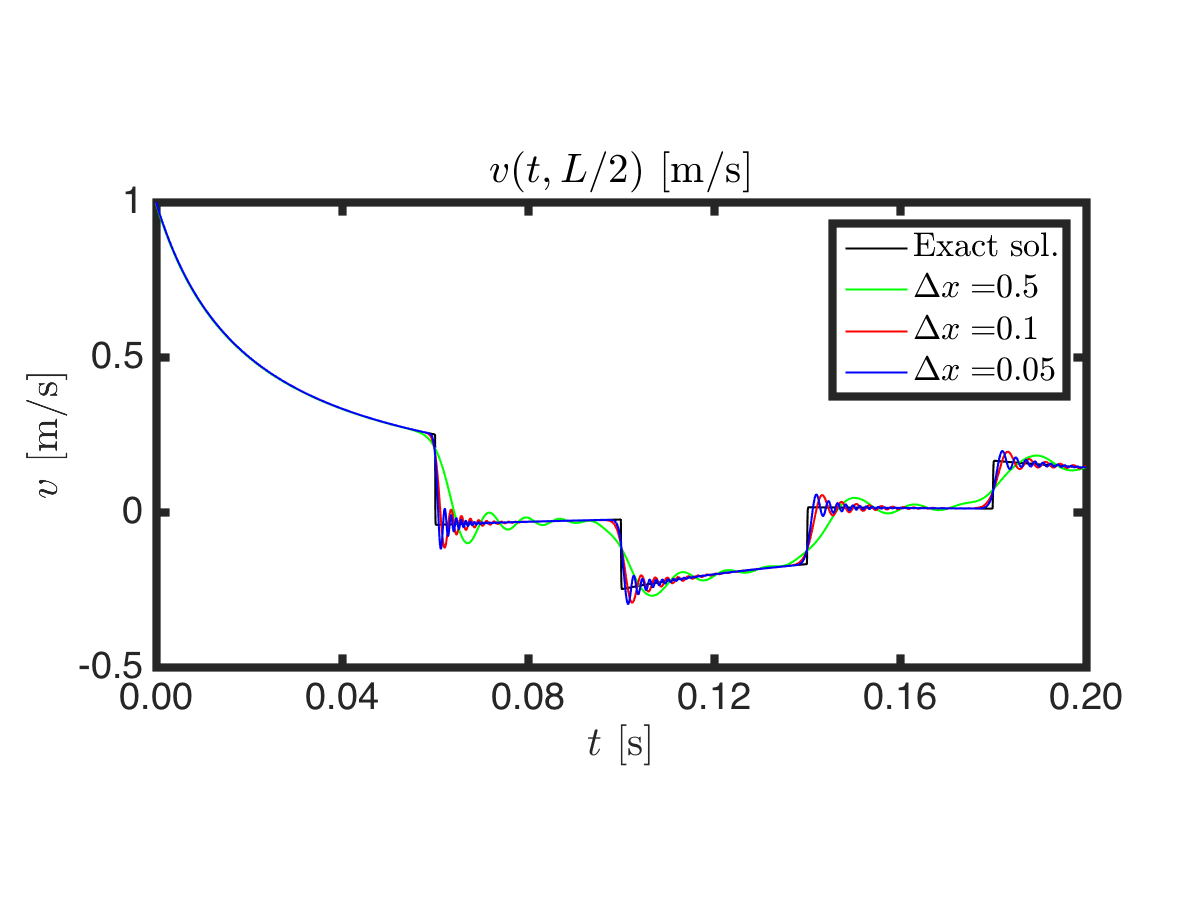}
	\includegraphics[width = 0.45\textwidth, trim=0.5cm 1.8cm 0.5cm 2.5cm, clip]{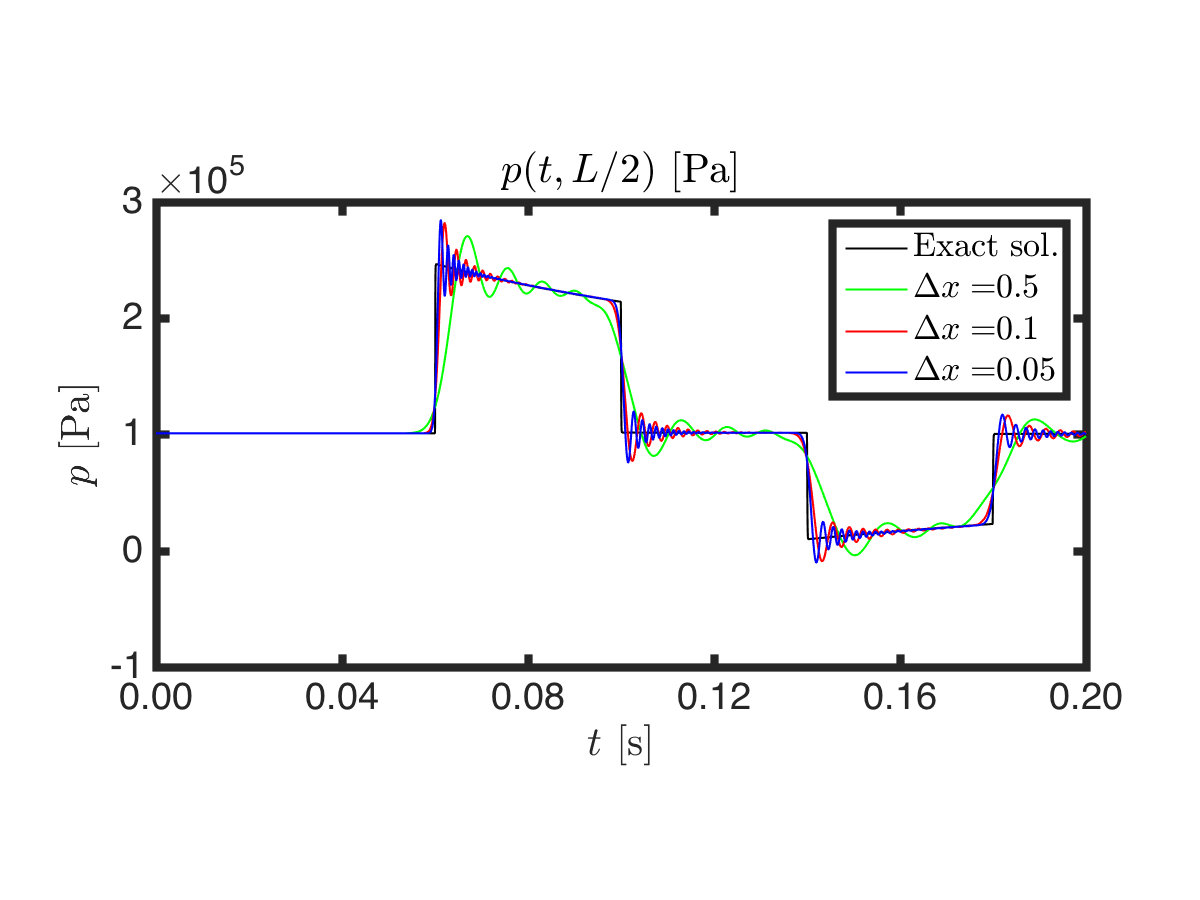}
	\caption{Behavior of $v$ and $p$ at $x = L/2$ by varying $\Delta x$.}
	\label{deltax05}
\end{figure}
\newline
\indent
We observe that while $\Delta x$ is smaller, the oscillations tend to lie on the shock line and the amplitude gets smaller, which is evidence of the convergence result.

\section{Conclusions and Future directions}
This article presents a novel approach to instantaneous valve closure in for the iso-thermal Euler equations or closed-channel Saint-Venant model. Lax-Wendroff schemes for steady states typically exhibit more oscillation than desirable and perhaps a marriage of two numerical techniques with the inverse Lax-Wendroff type boundary conditions could give improved results depending on the fluid flow which is modelled. Generic power semi-linearities for the full Euler equations have yet to be analyzed and should also be accessible via a generalization of Theorem \ref{main} to higher dimensions again using Strang's method from \cite{Strang1964}. Multiple pipes and shocks and higher moment models such as in  \cite{Koellermeier2020} also add an interesting challenge not yet covered by this work. 

\section{Acknowledgements}
The authors would like to thank Arthur Veldman, Stephan Trenn, and Axel Osses for interesting and useful discussions. H. Carrillo-Lincopi was funded by ANID / CORFO  International Centers of Excellence Program 10CEII-9157 Inria Chile, and Inria Challenge Oc\'eanIA.

\bibliographystyle{acm}
\bibliography{referencelib}
\end{document}